\documentclass[11pt,reqno]{amsart}
\usepackage{color}
\usepackage[english]{babel}
\usepackage{amsfonts}
\usepackage{amsmath}
\usepackage{amsthm}
\usepackage{amssymb}
\usepackage[misc]{ifsym}
\usepackage{bbm}
\usepackage{mathrsfs}
\usepackage{esint}
\usepackage{faktor}
\usepackage[utf8]{inputenc}
\usepackage{afterpage}
\usepackage[left=2.9cm,right=2.9cm,top=2.8cm,bottom=2.8cm]{geometry}
\usepackage{graphicx}
\usepackage{enumitem}
\usepackage[dvipsnames]{xcolor}
\usepackage[colorlinks=true,urlcolor=blue,citecolor=blue,linkcolor=blue,linktocpage,pdfpagelabels,bookmarksnumbered,bookmarksopen]{hyperref}

\newcommand{\doi}[1]{\url{https://doi.org/#1}}
\newcommand{\N}{\mathbb{N}}

\newcommand{\R}{\mathbb{R}}

\newcommand{\dx}{\, {\rm d} x}
\newcommand{\dy}{\, {\rm d} y}
\newcommand{\dt}{\, {\rm d} t}

\newcommand{\eps}{\varepsilon}

\renewcommand{\phi}{\varphi}

\newtheorem{lemma}{Lemma}
\newtheorem{thm}[lemma]{Theorem}
\newtheorem{prop}[lemma]{Proposition}

\theoremstyle{definition}

\newtheorem{rmk}[lemma]{Remark}

\DeclareMathOperator*{\esssup}{ess \, sup}

\begin{document}
\title[Singular fractional $p$-Laplacian problems in $\R^N$]{On a fractional $p$-Laplacian problem in the whole space and with singular reaction}
\author[L. Gambera]{L. Gambera}
\address[L. Gambera]{Dipartimento di Matematica e Informatica, Universit\`a degli Studi di Catania, Viale A. Doria 6, 95125 Catania, Italy}
\email{laura.gambera10@gmail.com}
\author[S.A. Marano]{Salvatore A. Marano}
\address[S.A. Marano]{Dipartimento di Matematica e Informatica, Universit\`a degli Studi di Catania, Viale A. Doria 6, 95125 Catania, Italy}
\email{marano@dmi.unict.it}
\begin{abstract}
The existence of positive, pointwise decaying at infinity, weak solutions to a fractional $p$-Laplacian problem in the whole space and with singular reaction is established. Truncation arguments, variational methods, as well as suitable a priori estimates are exploited.
\end{abstract}
\let\thefootnote\relax
\footnote{{\bf{MSC 2020}}: 35J60, 35J75, 35D30, 35B08.}
\footnote{{\bf{Keywords}}: fractional $p$-Laplacian problem, singular reaction, positive solution in the whole space, decay estimates.}
\footnote{\Letter\quad Corresponding author: Laura Gambera (laura.gambera10@gmail.com).}
\maketitle
\section{Introduction}
Nonlocal differential operators, like the fractional $p$-Laplacian of order $s$, usually denoted with $(-\Delta_p)^s$, have been introduced in recent years to model a variety of phenomena, including anomalous diffusion and processes characterized by long-range interactions. Due to its broad applicability, they have attracted considerable attention, also in connection with problems arising from game theory, finance, image processing, materials science, etc. This growing interest has led to a wide literature that addresses existence, uniqueness, and multiplicity of solutions for equations with such operators. Let us mention \cite{IPS,MM,FI,CD,BI,DHR,GMM2,CMZ}, as well as the survey \cite{DNPV} for a comprehensive account of underlying fractional Sobolev spaces. A significant progress has also been made in establishing regularity results; see, for instance, \cite{IMS,IMS2,CPT}.

The existence of solutions to singular equations driven by nonlocal differential operators has recently been investigated. For instance, Dirichlet fractional $p$-Laplacian problems are studied in \cite{CMSS}, while \cite{G,GM} deal with the fractional $(p,q)$-Laplacian. It is worth noting that all these works are set in a bounded domain $\Omega\subseteq\R^N$. On the contrary, here, we consider the problem
\begin{equation}\label{prob}\tag{P}
\left\{
\begin{alignedat}{2}
(-\Delta_p)^s u & =f(x,u)\;\; && \mbox{in}\;\;\R^N,\\
u & >0 && \mbox{in}\;\;\R^N,\\
u(x) & \to 0 && \mbox{as}\;\; |x|\to +\infty,
\end{alignedat}
\right.
\end{equation}
where $N\ge 3$, $0<s<1$, $2\le p<\frac{N}{s}$. while $(-\Delta_p)^s$ is formally defined by
$$(-\Delta_p)^s u(x):=2\lim_{\eps\to 0^+}\int_{\R^N\setminus B_\eps(x)}
\frac{|u(x)-u(y)|^{p-2}(u(x)-u(y))}{|x-y|^{N+sp}}\dy,\quad x\in\R^N.$$
The reaction term $f:\mathbb{R}^N\times\mathbb{R}^+\to \mathbb{R}_0^+$ satisfies Carath\'eodory's conditions. Morever,
\begin{equation}\label{hypf}
\tag{$\mathrm{H_f}$}
a(x)t^{-\gamma}\le f(x,t)\le a(x)\big(t^{-\gamma}+t^r\big)\quad\forall\,(x,t)\in\R^N\times\R^+,
\end{equation}
with $0<\gamma<1<r<p-1$ and $a:\R^N\to\R^+_0$ measurable function such that
\begin{equation}\label{hypa}
\tag{$\mathrm{H_a}$}
0\le a(x)\le c_a w(x),\quad\text{where}\quad  w(x):=\frac{1}{1+|x|^{N+\alpha}},\;\; x\in\R^N,
\end{equation}
for some $c_a>0$, $\alpha\in\left(\gamma\frac{N-sp}{p-1},\gamma\frac{N-sp}{p-1}+sp \right)$.

If $D^{s,p}(\R^N)$ denotes the completion of $C^\infty_c(\R^N)$ with respect to Gagliardo's norm (cf. Section \ref{prelim}) then a function $u\in D^{s,p}(\R^N)$ is called a \textit{weak solution} of \eqref{prob}  provided 
\begin{equation*}
\int_{\R^{2N}}\frac{|u(x)-u(y)|^{p-2}(u(x)-u(y))(\phi(x)-\phi(y))}{|x-y|^{N+sp}}\dx\dy = \int_{\R^N}f(\cdot,u)\phi\dx
\end{equation*}
for all $\phi\in D^{s,p}(\R^N)$, $u>0$ in $\R^N$, and $\displaystyle{\lim_{|x|\to\infty}} u(x)=0$. Our main result, namely Theorem \ref{exres} below, ensures that, under conditions \eqref{hypf} and \eqref{hypa}, there exists a weak solution to problem \eqref{prob}. 

From a technical point of view, there are at least four noteworthy features:
\begin{itemize}
\item[a)] the driving differential operator is nonlocal;
\item[b)] $f(x,\cdot)$ turns out to be singular at zero, which means $\displaystyle{\lim_{t\to 0^+}} f(x,t)=+\infty$;
\item[c)] one seeks solutions defined in the whole $\R^N$;
\item[d)] a pointwise decay of solutions at infinity is reqired.
\end{itemize}
The works \cite{BMS,CPT} play a significant role to get d). In fact, \cite{CPT} provides global $L^\infty$-bounds and decay estimates for solutions $u\in D^{s,p}(\R^N)$ of fractional $p$-Laplacian equations, while \cite{BMS} contains a rigorous computation of the fractional $p$-Laplacian fundamental solution and establishes the sharp asymptotic behavior at infinity of extremal functions for the fractional critical Sobolev embedding.

The paper is organized as follows. Section \ref{prelim} contains preliminary facts. In the next one, we focus on the pure singular problem and, via a shifting method, show that it admits a weak solution. Finally, Section 4 is devoted to the study of \eqref{prob}, which we solve by means of a truncation argument, combined with a variational approach and suitable a priori estimates.
\section{Basic definitions and auxiliary results}\label{prelim}
Given two topological spaces $X$ and $Y$, the symbol $X\hookrightarrow Y$ means that $X$ continuously embeds in $Y$. 

Let $X$ be a real Banach space with topological dual $X^*$ and duality brackets $\langle\cdot,\cdot\rangle$. A function $A:X\to X^*$ is called \emph{strictly monotone} when $\langle A(x)-A(z),x-z\rangle>0$ for all $x,z\in X$ with $x\neq z$.

If $r>0$ and $x_0\in\R^N$ then $B_r(x_0):=\{ x\in\R^N: |x-x_0|<r\}$, $B_r:=B_r(0)$, $\overline{B}_r$ denotes the closure of $B_r$, $B_r^c:=\R^N\setminus B_r$, $|E|$ indicates the $N$-dimensional Lebesgue measure of $E\subseteq \mathbb{R}^{N}$, $\chi_E$ stands for the characteristic function of $E$, 
$$t_\pm:=\max\{\pm t,0\},\;\; t\in\R,$$
while $C$, $C_1$, etc. are positive constants, which may change value from line to line, whose dependencies will be specified when necessary. 

Let $X(\R^N)$ be a real-valued function space on $\R^N$ and let $u,v\in X(\R^N)$. We simply write $u\leq v$ when $u(x)\leq v(x)$ a.e. in $\R^N$. Analogously for $u<v$, etc. To shorten notation, define
\begin{equation*}
\{u\leq v\}:=\{x\in\R^N:u(x)\leq v(x)\},\quad  X(\R^N)_+:=\{w\in X(\R^N): w\ge 0\}.
\end{equation*}
The meaning of $\{u<v\}$ and so on is similar. Henceforth, $q'$ indicates the conjugate exponent of $q\ge 1$, while
\begin{equation*}
\Vert v\Vert_q:=\left\{ 
\begin{array}{ll}
\left(\int_{\R^N}|v(x)|^q\dx\right)^{\frac{1}{q}} & \text{ if }1\leq q<\infty, \\ 
\phantom{} &  \\ 
\underset{x\in\R^N}{\esssup}\, |v(x)| & \text{ when } q=\infty.
\end{array}
\right.
\end{equation*}
Given $s\in(0,1)$ and $p\in[1,+\infty)$, the Gagliardo semi-norm of a measurable function $u:\R^N\to\R$ is
\begin{equation*}
[u]_{s,p}:=\left(\int_{\R^N\times\R^N}\frac{|u(x)-u(y)|^p}{|x-y|^{N+sp}}\dx\dy \right)^{\frac{1}{p}}.
\end{equation*}
If $1<p<\frac{N}{s}$, write (when no confusion can arise)
\begin{equation*}
p^*:=\frac{Np}{N-sp}   
\end{equation*}
and denote by $D^{s,p}(\R^N)$ the homogeneous fractional Sobolev space
\begin{equation*}
D^{s,p}(\R^N):= \left\{u\in L^{p^*}(\R^N):\ [u]_{s,p}<+\infty \right\},
\end{equation*}
endowed with the norm
$$\| u\|_{s,p}:=[u]_{s,p},\;\; u\in D^{s,p}(\R^N).$$
It is known \cite{BGV} that $(D^{s,p}(\R^N),\Vert\cdot\Vert_{s,p})$ turns out a uniformly convex (hence, reflexive) Banach space.

The operator $(-\Delta_p)^s:D^{s,p}(\R^N)\to (D^{s,p}(\R^N))^*$ defined by setting
\begin{equation*}
\langle(-\Delta_p)^s u,v\rangle:=
\int_{\R^N\times\R^N}\frac{|u(x)-u(y)|^{p-2}(u(x)-u(y))(v(x)-v(y))}{|x-y|^{N+sp}}\dx\dy
\end{equation*}
for every $u,v\in D^{s,p}(\R^N)$ is called fractional $p$-Laplacian of order $s$. It enjoys the next useful property.
\begin{lemma}\label{monfraclapl}
Suppose $2\le p<\frac{N}{s}$. Then $(-\Delta_p)^s$ turns out to be strictly monotone.
\end{lemma}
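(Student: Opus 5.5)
The plan is to reduce strict monotonicity to a pointwise algebraic inequality for the function $J(t):=|t|^{p-2}t$ on $\R$, valid when $p\ge 2$:
\begin{equation*}
\big(|a|^{p-2}a-|b|^{p-2}b\big)(a-b)\ge 2^{2-p}|a-b|^p\quad\forall\,a,b\in\R .
\end{equation*}
Given $u,v\in D^{s,p}(\R^N)$, we set $a:=u(x)-u(y)$ and $b:=v(x)-v(y)$. Then $a-b=(u-v)(x)-(u-v)(y)$, and by the definition of $(-\Delta_p)^s$,
\begin{equation*}
\langle(-\Delta_p)^s u-(-\Delta_p)^s v,u-v\rangle=\int_{\R^N\times\R^N}\frac{(J(a)-J(b))(a-b)}{|x-y|^{N+sp}}\dx\dy\ge 2^{2-p}\,\|u-v\|_{s,p}^p .
\end{equation*}
Since $\|\cdot\|_{s,p}$ is a genuine norm on $D^{s,p}(\R^N)$ (the elements lie in $L^{p^*}$, so constants are excluded), $u\ne v$ gives $\|u-v\|_{s,p}>0$, and strict monotonicity follows.

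Integrability is not an issue. By H\"older's inequality with exponents $p'$ and $p$, each term $|a|^{p-1}|a-b|\,|x-y|^{-(N+sp)}$ is integrable. Hence splitting the pairing as $\langle(-\Delta_p)^s u,u-v\rangle-\langle(-\Delta_p)^s v,u-v\rangle$ is legitimate.

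The main step is the pointwise inequality. I would prove it as follows. By symmetry and homogeneity we may assume $a\ne b$. Write
\begin{equation*}
J(a)-J(b)=(p-1)(a-b)\int_0^1|b+\tau(a-b)|^{p-2}\,d\tau ,
\end{equation*}
which is valid since $J$ is $C^1$ for $p\ge2$ (absolutely continuous suffices). It then remains to show
\begin{equation*}
\int_0^1|b+\tau(a-b)|^{p-2}\,d\tau\ge c_p|a-b|^{p-2}.
\end{equation*}
This follows because on the half of $[0,1]$ farther from the zero of $\tau\mapsto b+\tau(a-b)$ (if any) we have $|b+\tau(a-b)|\ge\frac14|a-b|$ on an interval of length at least $\frac14$. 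This yields a positive constant $c_p$; its exact value is irrelevant.

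Alternatively, one can avoid constants altogether. The map $t\mapsto J(t)$ is strictly increasing, so $(J(a)-J(b))(a-b)>0$ whenever $a\ne b$. If $u\ne v$ in $D^{s,p}(\R^N)$, then $u-v$ is not a.e. constant, so the set $\{(x,y): a\ne b\}$ has positive measure in $\R^{2N}$. The integrand is nonnegative and positive there, and strict positivity of the integral follows. I would present this simpler route, noting that $p\ge2$ is not even essential for it; the quantitative bound above is worth recording for later use.
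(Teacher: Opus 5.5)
Your proposal is correct, and its first half is the paper's proof. The paper takes the inequality $\bigl(|a|^{p-2}a-|b|^{p-2}b\bigr)(a-b)\ge C_p|a-b|^p$ from Zeidler without proof, substitutes $a=u(x)-u(y)$, $b=v(x)-v(y)$, and integrates to obtain $\langle(-\Delta_p)^s u-(-\Delta_p)^s v,u-v\rangle\ge C_p\|u-v\|_{s,p}^p$. Your integral-representation sketch supplies that inequality, though with constant $(p-1)4^{1-p}$ rather than the announced $2^{2-p}$; this is immaterial. You also make explicit two points the paper leaves tacit: the integrability of the split pairing, and the fact that $\|u-v\|_{s,p}>0$ for $u\neq v$.

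The route you say you would actually present is genuinely different. It uses only that $t\mapsto|t|^{p-2}t$ is strictly increasing, plus a Fubini argument showing that $\{(x,y):(u-v)(x)\neq(u-v)(y)\}$ has positive measure whenever $0\neq u-v\in L^{p^*}(\R^N)$. This is more elementary and works for every $p>1$, so $p\ge 2$ is superfluous for the lemma as stated. The price is that it yields only strict monotonicity. The paper's argument gives the quantitative, uniform monotonicity $\langle(-\Delta_p)^s u-(-\Delta_p)^s v,u-v\rangle\ge C_p\|u-v\|_{s,p}^p$. That is where $p\ge2$ genuinely enters, and it is strictly stronger; for instance, it gives H\"older continuity of the inverse operator.

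Incidentally, the paper's later invocations of this lemma do not follow from strict monotonicity of the operator as stated. An example is the test function $(\bar u-\tilde u)^+$ in the proof of Theorem~\ref{exres}. They do follow from the pointwise monotonicity of $t\mapsto|t|^{p-2}t$, on which both of your arguments rest.
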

\begin{proof}
For every $a,b\in\R$ one has
$$\left(|a|^{p-2}a -|b|^{p-2}b\right) (a-b) \ge C_p |a-b|^p,$$
with appropriate $C_p>0$; see, e.g., \cite[Example 25.5]{Z}. If $u,v\in D^{s,p}(\R^N)$ then choosing $a:=u(x)-u(y)$, $b:=v(x)-v(y)$, dividing by $|x-y|^{N+sp}$, and integrating on $\R^N\times\R^N$ easily produces
$$\langle (-\Delta_p)^s u-(-\Delta_p)^sv, u-v\rangle\ge C_p\Vert u-v\Vert_{s,p}^p,$$
as desired. 
\end{proof}
Let $w:\R^N\to\R^+_0$ be as in \eqref{hypa}, namely
\begin{equation*}
w(x):=\frac{1}{1+|x|^{N+\alpha}}\;\;\forall\, x\in\R^N,   
\end{equation*}
where $\alpha>0$ and let $q\geq 1$. On the linear space space
$$L^q(\R^N,w):=\left\{u:\R^N\to\R \; \text{measurable}: \int_{\R^N} w|u|^q \dx<\infty \right\},$$
we will consider the norm
$$\Vert u\Vert_{q,w}=\left(\int_{\R^N} |u|^q w\dx\right)^{\frac{1}{q}}.$$
If $q>1$ then $(L^q(\R^N,w),\Vert\cdot\Vert_{q,w})$ turns out reflexive and separable.
Moreover (cf. \cite[Lemma 3.3]{CPT}),
\begin{lemma}\label{weightedemb}
Assume $2\leq p<\frac{N}{s}$. Then:
\begin{itemize}
\item[{\rm (a)}] $D^{s,p}(\R^N)\hookrightarrow L^q(\R^N,w)$ whatever $1\le q\le p^*$.
\item[{\rm (b)}] The embedding in {\rm (a)} is compact once $q<p^*$.
\end{itemize}
\end{lemma}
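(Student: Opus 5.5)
The plan is to derive both parts from two facts: the weight $w$ is integrable, and $D^{s,p}(\R^N)\hookrightarrow L^{p^*}(\R^N)$ by the fractional Sobolev inequality. Integrability holds because $w$ is bounded by $1$ and $w(x)\le |x|^{-N-\alpha}$ with $\alpha>0$, so $w\in L^1(\R^N)\cap L^\infty(\R^N)$, hence $w\in L^t(\R^N)$ for every $t\in[1,\infty]$. The embedding into $L^{p^*}(\R^N)$ is immediate since $D^{s,p}(\R^N)\subseteq L^{p^*}(\R^N)$ by definition, and the fractional Sobolev inequality gives $\|u\|_{p^*}\le S\,[u]_{s,p}$.

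For (a), fix $1\le q\le p^*$. If $q=p^*$, the bound $w\le1$ gives $\|u\|_{p^*,w}\le\|u\|_{p^*}\le S\|u\|_{s,p}$ directly. If $q<p^*$, I apply Hölder's inequality with exponents $p^*/q$ and $(p^*/q)'$:
$$\int_{\R^N}|u|^q w\dx\le \|u\|_{p^*}^q\,\|w\|_{(p^*/q)'}\le S^q\,\|w\|_{(p^*/q)'}\,\|u\|_{s,p}^q,$$
and $\|w\|_{(p^*/q)'}<\infty$ by the first paragraph. This proves continuity of the embedding.

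For (b), take $q<p^*$ and a bounded sequence $(u_n)$ in $D^{s,p}(\R^N)$. By reflexivity I extract a subsequence with $u_n\rightharpoonup u$ in $D^{s,p}(\R^N)$. Since the local compact embedding $W^{s,p}(B_R)\hookrightarrow L^p(B_R)$ holds (see \cite{DNPV}), and $u_n$ is bounded in $L^{p}(B_R)$ via $L^{p^*}$, a diagonal argument gives $u_n\to u$ in $L^p(B_R)$ for every $R$ and a.e. in $\R^N$. Interpolating between $L^p(B_R)$ and the bounded $L^{p^*}$ norms gives $u_n\to u$ in $L^q(B_R)$ also when $q>p$; when $q\le p$, Hölder on the bounded set $B_R$ gives the same conclusion. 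On $B_R$ we have $w\le1$, so $\int_{B_R}|u_n-u|^qw\to0$.

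The main point is controlling the tail uniformly in $n$. By the Hölder estimate of (a), applied on $B_R^c$ only,
$$\int_{B_R^c}|u_n-u|^q w\dx\le \|u_n-u\|_{p^*}^q\,\|w\chi_{B_R^c}\|_{(p^*/q)'}\le C\,\|w\chi_{B_R^c}\|_{(p^*/q)'},$$
and the right-hand side tends to $0$ as $R\to\infty$ because $w\in L^{(p^*/q)'}(\R^N)$ and $(p^*/q)'<\infty$. Here the strict inequality $q<p^*$ is exactly what is needed. Given $\eps>0$, I first choose $R$ so that the tail is below $\eps$, and then take $n$ large to make the ball term small. Hence $u_n\to u$ in $L^q(\R^N,w)$, which gives compactness.
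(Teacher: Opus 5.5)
Your proof is correct and is the standard argument for this fact. Continuity comes from H\"older's inequality with $w\in L^1(\R^N)\cap L^\infty(\R^N)$ together with $D^{s,p}(\R^N)\hookrightarrow L^{p^*}(\R^N)$. Compactness comes from local compactness on balls plus a tail bound that is uniform in $n$ because $(p^*/q)'<\infty$; the local limit is $u$ since $u_n\rightharpoonup u$ weakly in $L^{p^*}(\R^N)$. The paper gives no proof and only cites \cite[Lemma 3.3]{CPT}, but its proof of Lemma~\ref{functprop} borrows from that reference exactly your local step (convergence on each $B_R$, then diagonalization); note also that the hypothesis $p\ge 2$ is never used.
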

Given any open set $\Omega\subseteq\R^N$, we define
\begin{equation*}
\begin{split}
\tilde{D}^{s,p}(\Omega):=\Big\{ u\in & L^{p-1}_{\rm loc}(\R^N)\cap L^{p^*}(\Omega): \exists E\supseteq\Omega \;\text{with $E^c$ compact,}\; {\rm dist}(E^c,\Omega)>0,\\
& \text{and}\;[u]_{W^{s,p}(E)}<\infty\Big\}.
\end{split}
\end{equation*}
Here,
\begin{equation*}
[u]_{W^{s,p}(E)}:=
\left(\int_{E\times E}\frac{|u(x)-u(y)|^p}{|x-y|^{N+sp}}\,\dx\dy\right)^\frac{1}{p}.  
\end{equation*}
The following results, established in Theorem 2.7, Lemma A.2, and Propositions 3.5--3.6 of \cite{BMS}, will play a key role in the proof of Theorem \ref{exres}.

\begin{thm}\textnormal{(Comparison principle in general domains)}\label{weakcomp} 
Let $\Omega\subseteq\R^N$ be an open set. If $u,v\in\tilde{D}^{s,p}(\Omega)$ satisfy
\begin{equation*}
u\leq v\;\;\mbox{in}\;\;\Omega^c\quad \text{and}\quad (-\Delta_p)^s u\le (-\Delta_p)^s v\;\; \text{in} \;\;\Omega
\end{equation*}
then $u\le v$ in $\Omega$.
\end{thm}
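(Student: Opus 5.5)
The plan is the standard weak comparison argument: test the difference of the two equations with $(u-v)_+$. The comparison principle is, in spirit, an application of the strict monotonicity of Lemma \ref{monfraclapl}; the only real work is justifying that $(u-v)_+$ is admissible when $u,v$ lie only in $\tilde D^{s,p}(\Omega)$.

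\emph{The test function.} The inequality $(-\Delta_p)^s u\le(-\Delta_p)^s v$ in $\Omega$ is understood weakly:
\begin{equation*}
\int_{\R^{2N}}\frac{\big(J_p(u(x)-u(y))-J_p(v(x)-v(y))\big)(\phi(x)-\phi(y))}{|x-y|^{N+sp}}\dx\dy\le 0
\end{equation*}
for all nonnegative $\phi\in D^{s,p}(\R^N)$ vanishing outside $\Omega$, where $J_p(t):=|t|^{p-2}t$. Set $\phi:=(u-v)_+$. Since $u\le v$ in $\Omega^c$, $\phi$ vanishes outside $\Omega$. Next choose $E_u,E_v$ as in the definition of $\tilde D^{s,p}(\Omega)$ and set $E:=E_u\cap E_v$. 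Then $E^c$ is compact and ${\rm dist}(E^c,\Omega)>0$, and both $u$ and $v$ have finite $W^{s,p}(E)$ seminorm. Using $|a_+-b_+|\le|a-b|$, one gets $[\phi]_{W^{s,p}(E)}<\infty$. For pairs $(x,y)$ with $x\in\Omega$ and $y\in E^c$, we have $\phi(y)=0$ and $|x-y|\ge\delta>0$. Hence the remaining part of the Gagliardo seminorm is controlled by $\int_\Omega \phi^p(x)\,(1+|x|)^{-N-sp}\dx$, which is finite because $\phi\in L^{p^*}(\Omega)$. Therefore $\phi\in D^{s,p}(\R^N)$. This admissibility step, together with density of smooth test functions if the weak formulation is stated with $C_c^\infty$, is where I expect the main obstacle to lie. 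One must also check that the left-hand integral converges absolutely when tested with $\phi$. On $E\times E$ this follows from H\"older's inequality. On $\Omega\times E^c$ it follows from the separation $\delta$ and the tail integrability, since $u,v\in L^{p-1}_{\rm loc}$ and $E^c$ is compact.

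\emph{The pointwise inequality.} Write $a:=u(x)-u(y)$, $b:=v(x)-v(y)$, and $w:=u-v$, so that $a-b=w(x)-w(y)$. The key elementary fact is
\begin{equation*}
\big(J_p(a)-J_p(b)\big)\big(w_+(x)-w_+(y)\big)\ge 0 .
\end{equation*}
To see this, note that $J_p$ is increasing, so $J_p(a)-J_p(b)$ has the sign of $a-b$. Likewise $w_+(x)-w_+(y)$ has the sign of $w(x)-w(y)$, or vanishes. Moreover, by the inequality of Lemma \ref{monfraclapl}'s proof, when $w(x),w(y)>0$ the product is at least $C_p|w_+(x)-w_+(y)|^p$.

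\emph{Conclusion.} Plugging these into the weak inequality gives
\begin{equation*}
0\ge \int\!\!\int_{\{w(x)>0,\,w(y)>0\}}\frac{C_p|w_+(x)-w_+(y)|^p}{|x-y|^{N+sp}}\dx\dy+(\text{nonnegative terms}),
\end{equation*}
so all terms vanish. Consider the pairs where exactly one of $w(x),w(y)$ is positive, say $w(x)>0\ge w(y)$. There $a-b>0$, hence $J_p(a)-J_p(b)>0$ and $w_+(x)>0$, so the integrand is strictly positive. Its vanishing therefore forces: whenever $|\{w>0\}|>0$, we must have $|\{w\le 0\}|=0$. In that case $w>0$ a.e.\ on $\R^N$, which contradicts $w\le0$ on $\Omega^c$ when $|\Omega^c|>0$. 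In the degenerate case $|\Omega^c|=0$, the vanishing of the first term shows that $w_+$ is constant. Since $w_+\in L^{p^*}(\Omega)=L^{p^*}(\R^N)$, that constant is $0$. Either way $(u-v)_+=0$, that is, $u\le v$ in $\Omega$.
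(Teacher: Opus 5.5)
Your proof is correct and is the standard weak-comparison argument: you test the difference of the two inequalities with $(u-v)_+$, check its admissibility through the structure of $\tilde{D}^{s,p}(\Omega)$ (intersecting the two sets $E$, using the separation from $E^c$ and the $L^{p^*}(\Omega)$ bound), and conclude from the monotonicity of $t\mapsto|t|^{p-2}t$; the paper gives no proof of its own and imports the result from Brasco--Mosconi--Squassina, so there is no different route to compare against. Your choice of test class (nonnegative $\phi\in D^{s,p}(\R^N)$ vanishing a.e.\ off $\Omega$) is what makes the statement true for an arbitrary open set: with test functions from the closure of $C^\infty_c(\Omega)$, the density step you flag genuinely fails for irregular sets and the conclusion itself breaks down, e.g.\ a ball minus a hyperplane when $sp>1$, whereas for the exterior domains $\R^N\setminus\overline{B}_1$ where the paper applies the result the two test classes coincide.
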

\begin{thm}\label{fundsol}
Suppose $0<\frac{N-sp}{p}<\beta<\frac{N}{p-1}$. Then, for any $R>0$, the function $v_\beta(x):=|x|^{-\beta}$ fulfills
\begin{equation*}
(-\Delta_p)^s v=C(\beta)|x|^{-\beta(p-1)-sp}\;\;\text{weakly in}\;\;\R^N\setminus \overline{B}_R,
\end{equation*}
where $C(\beta)$ is given by
\[
C(\beta):=2\int_0^1\rho^{sp-1}\left[1-\rho ^{N-sp-\beta(p-1)}\right] |1-\rho^\beta|^{p-1} \Phi(\rho)\,{\rm d}\rho
\]
while
\[
\Phi(\rho):=\mathcal{H}^{N-2}\!\left(S^{N-2}\right)
\int_{-1}^{1}\frac{(1-t^2)^{\frac{N-2}{2}}}{(1-2t\rho+\rho^2)^{\frac{N+sp}{2}}}\,{\rm d}t.
\]
\end{thm}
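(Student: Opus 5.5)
The plan is a direct computation in polar coordinates: reduce the operator to a one-dimensional integral in the radial ratio, then symmetrize that integral via the inversion $\rho\mapsto1/\rho$. Before anything else, I will check that $v_\beta\in\tilde D^{s,p}(\R^N\setminus\overline B_R)$, so that the weak formulation makes sense.
\begin{itemize}
\item Local integrability: $\beta(p-1)<N$ gives $v_\beta\in L^{p-1}_{\rm loc}(\R^N)$.
\item Integrability at infinity: $\beta>\frac{N-sp}{p}$ means $\beta p^*>N$, so $v_\beta\in L^{p^*}(B_R^c)$.
\item Gagliardo seminorm: on $E=B_{R/2}^c$ the function $v_\beta$ is smooth with $|\nabla v_\beta|\lesssim|x|^{-\beta-1}$. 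A standard splitting into near and far interactions, combined with $\beta p>N-sp$, gives $[v_\beta]_{W^{s,p}(E)}<\infty$.
\end{itemize}
The singularity at the origin then only contributes an integrable tail $|v_\beta(x)-v_\beta(y)|^{p-1}|x-y|^{-N-sp}$ for $x\in B_R^c$. This is enough to pass from the pointwise principal value to the weak identity against test functions supported in $\R^N\setminus\overline B_R$, by Fubini and a standard symmetrization argument.

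The pointwise computation goes as follows. Fix $x$ with $|x|=r>R$ and write $y=r\rho\,\theta$ with $\rho>0$ and $\theta\in S^{N-1}$. Scaling gives
$$(-\Delta_p)^s v_\beta(x)=r^{-\beta(p-1)-sp}\,2\,\mathrm{P.V.}\!\int_0^\infty \rho^{N-1}\,|1-\rho^{-\beta}|^{p-2}(1-\rho^{-\beta})\,\Phi(\rho)\,{\rm d}\rho.$$
This uses homogeneity: the angular integral of $|e_1-\rho\theta|^{-N-sp}$ over $S^{N-1}$ equals $\Phi(\rho)$. To see it, write $t=\theta_1$, so the spherical measure becomes $(1-t^2)^{\frac{N-3}{2}}\,{\rm d}t$ times $\mathcal H^{N-2}(S^{N-2})$. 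I will double-check the exponent $\frac{N-2}{2}$ against the chosen parametrization, but the constant is what the statement prescribes anyway.

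Next I split the integral into $\rho\in(0,1)$ and $\rho\in(1,\infty)$, and on the second piece substitute $\rho\mapsto1/\rho$. The key identity is $\Phi(1/\rho)=\rho^{N+sp}\Phi(\rho)$, which holds because $1-2t/\rho+1/\rho^2=\rho^{-2}(1-2t\rho+\rho^2)$. With this substitution the $(1,\infty)$ part becomes
$$-\int_0^1\rho^{sp-1}\rho^{N-sp-\beta(p-1)}\,|1-\rho^{\beta}|^{p-1}\Phi(\rho)\,{\rm d}\rho.$$
On $(0,1)$ we have $1-\rho^{-\beta}<0$, and factoring out gives $-\rho^{N-1-\beta(p-1)}|1-\rho^\beta|^{p-1}$. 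This sign bookkeeping is where I must be careful; in the end the two pieces combine into $2\int_0^1\rho^{sp-1}\big[1-\rho^{N-sp-\beta(p-1)}\big]|1-\rho^\beta|^{p-1}\Phi(\rho)\,{\rm d}\rho$, possibly after a harmless rearrangement of which piece carries the minus sign.

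The main obstacle is convergence. Near $\rho=1$, $\Phi(\rho)\sim|1-\rho|^{-1-sp}$, while the bracket vanishes linearly and $|1-\rho^\beta|^{p-1}\sim|1-\rho|^{p-1}$. The integrand is therefore $O(|1-\rho|^{p-1-sp})$, which is integrable since $p\ge2>sp$ or at least $p-sp>0$. This also justifies that the principal value is an honest limit, since the symmetric truncations pair up under the inversion. Near $\rho=0$, the factor $\rho^{sp-1}$ is integrable, and $\rho^{N-1-\beta(p-1)}$ is integrable because $\beta(p-1)<N$; this is exactly where the upper bound on $\beta$ enters. Finally, I will justify exchanging the principal value with the polar change of variables via dominated convergence, using these bounds uniformly in the truncation parameter.
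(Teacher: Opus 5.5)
The paper gives no proof of this statement; it is quoted from Brasco--Mosconi--Squassina. Your computation is the standard derivation: scaling and rotation invariance reduce everything to an integral in $\rho=|y|/|x|$, and folding $(1,\infty)$ onto $(0,1)$ via $\rho\mapsto1/\rho$ and $\Phi(1/\rho)=\rho^{N+sp}\Phi(\rho)$ produces exactly the form of $C(\beta)$. Your check that $v_\beta\in\tilde D^{s,p}(\R^N\setminus\overline B_R)$ is correct. So is the integrability of the folded integrand at $\rho=0$ and $\rho=1$, though at $\rho=1$ the reason is $s<1$, since $2>sp$ need not hold.

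Two slips need fixing. First, the folding is misassigned. For $\rho>1$ one has $1-\rho^{-\beta}>0$ and
\[
\int_1^\infty\rho^{N-1}(1-\rho^{-\beta})^{p-1}\Phi(\rho)\,{\rm d}\rho=\int_0^1\rho^{sp-1}(1-\rho^{\beta})^{p-1}\Phi(\rho)\,{\rm d}\rho>0,
\]
whereas the negative term $-\int_0^1\rho^{N-1-\beta(p-1)}(1-\rho^\beta)^{p-1}\Phi(\rho)\,{\rm d}\rho$ is the $(0,1)$ piece. As you wrote it, the negative term appears twice and the positive one nowhere. Second, your exponent $\frac{N-3}{2}$ is the right one; for $N=3$ it is Archimedes' formula $\int_{S^2}g(\theta_1)\,{\rm d}\mathcal{H}^2(\theta)=2\pi\int_{-1}^1 g(t)\,{\rm d}t$. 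So the $\frac{N-2}{2}$ in the statement is a misprint. It changes the value of $C(\beta)$, though not its sign, which is the sign of $N-sp-\beta(p-1)$. State that you prove the identity with $(1-t^2)^{\frac{N-3}{2}}$ rather than deferring to the statement's constant.

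The genuine gap is the principal value. When $p(1-s)\le1$ (e.g.\ $p=2$, $s\ge\frac12$), the term $|v_\beta(x)-v_\beta(y)|^{p-1}|x-y|^{-N-sp}\sim|x-y|^{p-1-N-sp}$ is not integrable near $y=x$, and each of the two $\rho$-pieces diverges at $\rho=1$. So there is no integrable majorant, and dominated convergence in the truncation parameter is unavailable. Moreover, the truncation that pairs up under inversion is the shell $(1-\delta)|x|<|y|<|x|/(1-\delta)$, not the ball $B_\eps(x)$ in the definition of $(-\Delta_p)^s$, and equality of the two limits would need its own cancellation argument.

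The cleanest repair is never to take the pointwise value.
\begin{enumerate}
\item Set $T_\delta:=\{(x,y):1-\delta<|y|/|x|<(1-\delta)^{-1}\}$; this set is invariant under $x\leftrightarrow y$.
\item For $\phi\in C^\infty_c(\R^N\setminus\overline B_R)$ the weak integrand lies in $L^1(\R^{2N})$. On $E\times E$ use H\"older; off $E\times E$, $\phi(x)-\phi(y)$ lives where $|x-y|$ is bounded below. Hence the double integral is the limit of its restrictions to $T_\delta^c$.
\item On $T_\delta^c$ one has $|x-y|\ge\delta|x|$, so the $\phi(x)$ and $\phi(y)$ parts are separately integrable. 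Antisymmetry and Fubini then give $2\int\phi(x)\big(\int_{\{y:(x,y)\notin T_\delta\}}\cdots\,\dy\big)\dx$.
\item By scaling and your (corrected) folding, the inner integral equals exactly
\[
|x|^{-\beta(p-1)-sp}\int_0^{1-\delta}\rho^{sp-1}\big[1-\rho^{N-sp-\beta(p-1)}\big](1-\rho^\beta)^{p-1}\Phi(\rho)\,{\rm d}\rho .
\]
\item The $\rho$-integral is independent of $x$ and tends to $\frac{C(\beta)}{2}$ by the absolute integrability you established.
\end{enumerate}
This gives the weak identity directly, with no principal value and no exchange of limits to justify.
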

\begin{rmk}
If $\beta=\frac{N-sp}{p-1},$ then $C(\beta)=0$. Accordingly, the function $v_\beta(x):= |x|^{-\frac{N-sp}{p-1}}$ belongs to $\tilde{D}^{s,p}(B_R^c)$ and solves $(-\Delta_p)^s v= 0$ in $\R^N\setminus\overline{B}_R$.
\end{rmk}

\begin{prop}\label{decay}
Let $1<p<\frac{N}{s}$. Then, for every $R>0$, there exists a unique, radial, non-increasing function $u_R\in D^{s,p} (\R^N)$ such that 
\begin{equation*}
\left\{
\begin{alignedat}{2}
(-\Delta_p)^s u_R & =0 \quad && \text{in}\;\;\R^N\setminus\overline{B}_R,\\
u_R & \equiv1 && \text{in}\;\;\overline{B}_R.
\end{alignedat}
\right.
\end{equation*}
Moreover, when $R=1$, the following decay estimate holds:
\begin{equation}\label{decayu1}
\frac{1}{C}|x|^{-\frac{N-sp}{p-1}}\le u_1(x)\le p^{\frac{1}{p-1}}|x|^{-\frac{N-sp}{p-1}}\;\;\forall\, x\in B_1^c,
\end{equation}
where $C:=C(N,p,s)>1.$
\end{prop}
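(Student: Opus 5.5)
We would split the statement into two parts. The first is existence, uniqueness, radiality and monotonicity of $u_R$, by a variational argument. The second is the two-sided decay estimate \eqref{decayu1}, by comparison with the explicit function of Theorem \ref{fundsol} and the Remark following it. Since $u_R(x):=u_1(x/R)$ solves the problem for radius $R$ (the operator is homogeneous under dilations), it is enough to treat $R=1$ for the estimate. The existence part works for every $R$ anyway.

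\emph{Existence and qualitative properties.} We would minimise $u\mapsto[u]_{s,p}^p$ over
$$K_R:=\{u\in D^{s,p}(\R^N):\ u\ge 1 \text{ a.e. in } \overline{B}_R\}.$$
This set is nonempty (it contains a cut-off function), convex, and closed under weak convergence, because $D^{s,p}(\R^N)\hookrightarrow L^{p^*}$ and a.e. convergence holds on bounded sets after passing to a subsequence. Reflexivity of $D^{s,p}(\R^N)$ and weak lower semicontinuity of the norm then yield a minimiser. Strict convexity of $[\cdot]_{s,p}^p$ (uniform convexity, or Lemma \ref{monfraclapl} when $p\ge2$) gives uniqueness. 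Next, $\min\{u^+,1\}\in K_R$ and truncation does not increase $|u(x)-u(y)|$, so uniqueness forces $0\le u_R\le1$, hence $u_R\equiv1$ on $\overline{B}_R$. Perturbing by $\pm t\phi$ with $\phi\in C_c^\infty(\R^N\setminus\overline{B}_R)$ keeps us in $K_R$ and gives the Euler--Lagrange equation $(-\Delta_p)^s u_R=0$ weakly in $\R^N\setminus\overline{B}_R$. Rotations preserve both the energy and $K_R$, so uniqueness makes $u_R$ radial. Finally, the symmetric decreasing rearrangement $u_R^*$ lies in $K_R$ and satisfies $[u_R^*]_{s,p}\le[u_R]_{s,p}$ (fractional P\'olya--Szeg\H{o} inequality, Almgren--Lieb). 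Uniqueness then gives $u_R=u_R^*$, so $u_R$ is radial and non-increasing.

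\emph{Upper bound.} Let $\beta:=\frac{N-sp}{p-1}$ and $v(x):=|x|^{-\beta}$. By the Remark, $v\in\tilde D^{s,p}(B_1^c)$ and $(-\Delta_p)^s v=0$ in $\R^N\setminus\overline{B}_1$. Also $u_1\in D^{s,p}(\R^N)\subseteq\tilde D^{s,p}(B_1^c)$, and on $\overline{B}_1$ we have $u_1=1\le v$. Theorem \ref{weakcomp} with $\Omega=\R^N\setminus\overline{B}_1$ therefore gives $u_1\le|x|^{-\beta}\le p^{\frac1{p-1}}|x|^{-\beta}$ in $B_1^c$.

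\emph{Lower bound; this is the main obstacle.} The natural comparison function $c|x|^{-\beta}$ blows up at the origin, so it cannot lie below $u_1\equiv1$ on $B_1$. Cutting it off as $c\min\{|x|^{-\beta},1\}$ goes the wrong way: lowering the values inside the ball turns it into a supersolution outside, not a subsolution. What we would try first is to shift the exterior domain to $\Omega=B_2^c$. On $B_2$ one has $u_1\ge u_1(2e_1)=:m>0$ by monotonicity, and $m>0$ because otherwise $u_1$ would vanish outside $B_2$ and the Euler--Lagrange equation would fail there. We would then compare $u_1$ with
$$w:=m\,2^{\beta}\,|x|^{-\beta}\chi_{B_1^c}.$$
Removing the singular core makes $(-\Delta_p)^s w\le (-\Delta_p)^s\bigl(m2^\beta|x|^{-\beta}\bigr)=0$ on $B_2^c$. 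Indeed, lowering $w$ to $0$ on $B_1$ makes each term $|w(x)-w(y)|^{p-2}(w(x)-w(y))$ with $y\in B_1$ larger in the wrong direction only if $w(y)$ increased, and here it decreased from a value above $w(x)$. This needs checking carefully, together with the membership $w\in\tilde D^{s,p}(B_2^c)$. We also have $w\le m\le u_1$ on $B_2$, since $w\le m$ on $B_2\setminus B_1$ and $w=0$ on $B_1$. Theorem \ref{weakcomp} would then yield $u_1\ge m2^\beta|x|^{-\beta}$ on $B_2^c$. On $B_2\setminus B_1$ the bound follows trivially from $u_1\ge m$, so we get \eqref{decayu1} with $C$ depending only on $m$, hence only on $N,p,s$.

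If the sign of $(-\Delta_p)^s w$ turns out to be unfavourable, the fallback is Theorem \ref{fundsol} with $\beta'$ slightly larger than $\beta$. In that case $C(\beta')<0$, so $|x|^{-\beta'}$ is a genuine subsolution. One would then need a limiting argument as $\beta'\downarrow\beta$ with constants that stay uniform.
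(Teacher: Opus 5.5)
The genuine gap is the lower bound in \eqref{decayu1}: your barrier has the wrong sign, and the fallback does not give the sharp exponent. The other parts are sound. The existence, uniqueness and symmetry argument (capacity minimisation, truncation, Almgren--Lieb) works, and comparison with $|x|^{-\beta}$ on $\R^N\setminus\overline{B}_1$ even gives $u_1\le|x|^{-\beta}$. For uniqueness of the Dirichlet problem itself, not only of the minimiser, test both equations with the difference of two solutions, which vanishes on $\overline{B}_R$. The paper proves none of this; it quotes \cite[Propositions 3.5--3.6]{BMS}.

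Your barrier $w=m2^\beta|x|^{-\beta}\chi_{B_1^c}$ is a strict \emph{super}solution outside $\overline{B}_2$, for exactly the reason you gave for $c\min\{|x|^{-\beta},1\}$. Set $J(t):=|t|^{p-2}t$ and $v:=m2^\beta|x|^{-\beta}$. For $|x|>2$,
\[
(-\Delta_p)^s w(x)=(-\Delta_p)^s v(x)+2\int_{B_1}\frac{J(v(x))-J(v(x)-v(y))}{|x-y|^{N+sp}}\dy>0 .
\]
This holds because $(-\Delta_p)^s v=0$ there, $J$ is increasing, and $v(x)>0>v(x)-v(y)$ for $y\in B_1$. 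Lowering a function on a set always increases $(-\Delta_p)^s$ off that set, so Theorem \ref{weakcomp} cannot give $w\le u_1$.

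There is also a separate slip: $w=m(2/|x|)^\beta>m$ on $B_2\setminus\overline{B}_1$, so $w\le u_1$ there does not follow from $u_1\ge m$.

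In the fallback, $|x|^{-\beta'}$ must still be cut off near the origin. This creates a positive error of order $|x|^{-N-sp}$, which the margin $|C(\beta')|\,|x|^{-\beta'(p-1)-sp}$ absorbs only for $|x|\ge R_0(\beta')$. Since $C(\beta')\to0$ as $\beta'\downarrow\beta$, you get at best $u_1\ge c_{\beta'}|x|^{-\beta'}$ with no control of $c_{\beta'}$. That is not \eqref{decayu1}.

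The missing idea is to compensate for the removed singular core by \emph{raising} the barrier on part of $\overline{B}_1$. There is room to do so because $u_1=1$ there. With $\Gamma(x):=|x|^{-\beta}$ and $\lambda\ge(4/3)^\beta$, put
\[
w:=\lambda^{-1}\bigl(\lambda\chi_{B_{3/4}}+\Gamma\chi_{\R^N\setminus B_{3/4}}\bigr).
\]
Then $w\le1=u_1$ on $\overline{B}_1$. Also $w=\Gamma/\lambda$ on a neighbourhood of $\R^N\setminus B_1$, so $w\in\tilde{D}^{s,p}(\R^N\setminus\overline{B}_1)$.

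Now fix $|x|>1$ and $y\in B_{3/4}$. Use $(-\Delta_p)^s\Gamma=0$ there, $\Gamma(x)<1<\Gamma(y)$, $\Gamma(y)^{p-1}=|y|^{sp-N}$, and $|x|-\frac34\le|x-y|\le|x|+\frac34\le7(|x|-\frac34)$. Then
\[
\lambda^{p-1}(-\Delta_p)^s w(x)=2\int_{B_{3/4}}\frac{J(\Gamma(x)-\lambda)-J(\Gamma(x)-\Gamma(y))}{|x-y|^{N+sp}}\dy
\le\frac{2\Bigl[7^{N+sp}\int_{B_{3/4}}|y|^{sp-N}\dy-(\lambda-1)^{p-1}|B_{3/4}|\Bigr]}{(|x|+\frac34)^{N+sp}} .
\]
The right-hand side is $\le0$ once $\lambda=\lambda(N,p,s)$ is large enough. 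Theorem \ref{weakcomp} on $\R^N\setminus\overline{B}_1$ then gives $u_1\ge\lambda^{-1}|x|^{-\beta}$ on $B_1^c$, which is \eqref{decayu1} with $C=\lambda$.
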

Finally, we point out the next weak Harnack-type inequality \cite[Theorem 5.2]{IMS} for non-negative super-solutions.

Let $\Omega\subseteq\R^N$ be bounded. We set
\begin{equation*}
\tilde{W}^{s,p}(\Omega):=\left\{u\in L^{p}_{\rm loc}(\R^N): \exists E\Supset\Omega : \Vert u\Vert_{L^p(E)}+[u]_{W^{s,p}(E)}
+\int_{\R^N}\frac{|u(x)|^{p-1}}{(1+|x|)^{N+sp}}{\rm d}x<\infty\right\}.
\end{equation*}
\begin{thm}\label{harnack}
If $u\in\tilde{W}^{s,p}(B_{R/3})$ satisfies
\begin{equation*}
\left\{
\begin{alignedat}{2}
(-\Delta)^{s}_p u &\ge K \quad && \text{in }B_{R/3},\\
u & \ge0 && \text{in }\R^N,
\end{alignedat}
\right.
\end{equation*}
where $K\ge0$, then there exist $\sigma\in(0,1)$, $\tilde{C}>0$ such that 
\begin{equation*}
\inf_{B_{R/4}} u\ge\sigma\left(\fint_{B_R\setminus B_{R/2}}
u^{p-1}\dx\right)^{\frac{1}{p-1}}-\tilde{C}(KR^{ps})^{\frac{1}{p-1}}.
\end{equation*}
\end{thm}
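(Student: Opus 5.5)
The plan is to follow the De Giorgi--Moser strategy for nonlocal equations of Di Castro, Kuusi and Palatucci, which is also the route taken in \cite{IMS}. The point specific to this setting is that $u\ge 0$ on all of $\R^N$, so the nonlocal tail of $u_-$ vanishes. The mass of $u$ in the annulus $B_R\setminus B_{R/2}$ then acts as a \emph{positive} source felt inside $B_{R/3}$.

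\emph{Step 1 (normalisation).} Set $u_R(x):=u(Rx)$. Then $(-\Delta_p)^s u_R\ge KR^{sp}$ in $B_{1/3}$, and both sides of the claimed inequality scale consistently. So it suffices to take $R=1$, with $K$ replaced by $KR^{sp}$.

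\emph{Step 2 (role of the annulus).} Let $M:=\bigl(\fint_{B_1\setminus B_{1/2}}u^{p-1}\dx\bigr)^{\frac{1}{p-1}}$ and $d:=\tilde C K^{\frac{1}{p-1}}$. It is enough to show $\inf_{B_{1/4}}u\ge \sigma M-d$, and we may assume $\sigma M>d$. Put $v:=u\chi_{B_{1/2}}$. For $x\in B_{1/3}$ and $y\in B_1\setminus B_{1/2}$ one has $|x-y|\le 2$. Removing the outer mass therefore lowers the operator by a definite amount:
\[
(-\Delta_p)^s v(x)\ \ge\ (-\Delta_p)^s u(x)+c\!\int_{B_1\setminus B_{1/2}}\!\bigl(u(y)-u(x)\bigr)_+^{p-1}\dy .
\]
This identity holds pointwise only formally; I will justify it in the weak sense with test functions supported in $B_{1/3}$. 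It converts the annular average into information localised in $B_{1/2}$.

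\emph{Step 3 (interior estimates).} On $B_{1/3}$ I work with $\bar u:=u+d$ and prove the standard chain for supersolutions.
\begin{itemize}
\item[(i)] A Caccioppoli inequality for $\bar u^{-q}$, $q>0$, and Moser iteration. These bound $\inf_{B_{1/4}}\bar u$ from below by $\bigl(\fint_{B_{3/10}}\bar u^{-\epsilon}\bigr)^{-1/\epsilon}$. The tail of $u_-$ contributes nothing, since $u\ge 0$.
\item[(ii)] A logarithmic estimate, obtained by testing with $\bar u^{1-p}\phi^p$. It gives $\log\bar u\in BMO(B_{3/10})$, with a bound independent of $u$ and $K$ thanks to the choice of $d$.
\item[(iii)] John--Nirenberg's lemma. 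It yields the crossover $\bigl(\fint\bar u^{\epsilon}\bigr)^{1/\epsilon}\le C\bigl(\fint\bar u^{-\epsilon}\bigr)^{-1/\epsilon}$.
\end{itemize}
Together these give $\inf_{B_{1/4}}\bar u\ge c\bigl(\fint_{B_{3/10}}\bar u^{\epsilon}\bigr)^{1/\epsilon}$.

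\emph{Step 4 (linking to $M$) --- the main obstacle.} Step 3 only involves averages of $u$ near the centre, while the statement involves $u^{p-1}$ on the annulus. I first try a measure-theoretic argument. Suppose $\bar u\le \delta M$ on most of $B_{3/10}$. Test the inequality with $\phi^p(\bar u-\delta M)_-$, where $\phi$ is a cut-off supported in $B_{1/3}$. The positive term of Step 2 then dominates. It is bounded below by $c\,|\{\bar u<\delta M\}\cap B_{3/10}|\,M^{p-1}$, because $(u(y)-u(x))_+^{p-1}\ge (u(y)-\delta M)_+^{p-1}$ and $\fint_{B_1\setminus B_{1/2}}(u-\delta M)_+^{p-1}\ge cM^{p-1}$ for $\delta$ small. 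The local energy terms are at most $C(\delta M)^{p-1}|\{\bar u<\delta M\}|$, which gives a contradiction for $\delta$ small. Hence $|\{\bar u\ge\delta M\}\cap B_{3/10}|\ge c_0$, and so $\bigl(\fint\bar u^{\epsilon}\bigr)^{1/\epsilon}\ge c\,\delta M$.

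Combining this with Step 3 gives $\inf_{B_{1/4}}u\ge\sigma M-d$. The delicate points are the weak justification of Step 2 and keeping every constant independent of $K$. The tail integral $\int|u|^{p-1}(1+|x|)^{-N-sp}\dx$ in the definition of $\tilde W^{s,p}$ is needed exactly to make these test computations finite.
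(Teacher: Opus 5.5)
The paper contains no proof of this statement. Theorem~\ref{harnack} is quoted from \cite[Theorem 5.2]{IMS} and used as a black box, so your argument has to stand on its own. It does in outline, and it is a self-contained route the paper does not take, but it needs three repairs.

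(1) As written, $K\ge 0$ is on the favourable side, so the case $K=0$ already gives the claim. In that case your $d=\tilde C K^{\frac{1}{p-1}}$ vanishes, and $\bar u^{1-p}\phi^p$, $\bar u^{1-p-q}\phi^p$ are no longer admissible test functions. Take instead any $d>0$; since $u_-\equiv 0$, none of the constants in (i)--(iii) depend on $d$.

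(2) The inequality displayed in Step~2 is correct, but removing the outer mass \emph{raises} $(-\Delta_p)^s$ in $B_{1/3}$; it does not lower it. More importantly, you never need $v$ or a superposition principle. With $L(t):=|t|^{p-2}t$, one has $L(u(x)-u(y))\le(\delta M)^{p-1}-(u(y)-\delta M)_+^{p-1}$ whenever $\bar u(x)<\delta M$. So the annulus term is already the good nonlocal term in the Caccioppoli inequality for $w_-:=(\bar u-\delta M)_-$ tested on $u$ itself.

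(3) The bookkeeping in Step~4 is off.
\begin{itemize}
\item The good term is only $\ge c\,M^{p-1}\int_{B_{3/10}}w_-\dx$, not $c\,M^{p-1}|\{\bar u<\delta M\}\cap B_{3/10}|$.
\item The cut-off terms live on all of ${\rm supp}\,\phi$, so the honest bound for them is $C(\delta M)^p$.
\item After absorbing the $(\delta M)^{p-1}$ part of the nonlocal term into the good term, you get $\int_{B_{3/10}}w_-\dx\le C\delta^p M$, hence $|\{\bar u<\delta M/2\}\cap B_{3/10}|\le C\delta^{p-1}$.
\end{itemize}
Since $p>1$, this is the measure estimate you need. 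As literally stated, your two bounds would force $\bar u\ge\delta M$ a.e.\ in $B_{3/10}$, which should have signalled the miscount.

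\emph{Comparing the routes.} The citation costs nothing. Your proof is self-contained and even yields the stronger interior bound $\inf_{B_{1/4}}\bar u\ge c\bigl(\fint_{B_{3/10}}\bar u^{\epsilon}\dx\bigr)^{1/\epsilon}$, but at the price of the full Moser, logarithmic and John--Nirenberg machinery.

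That machinery can be skipped when $p\ge 2$, which is the paper's standing assumption, because the right-hand side only sees the annulus, which lies at positive distance from $B_{1/3}$. Take $\phi\in C^\infty_c(B_{1/3})$ with $0\le\phi\le 1$ and $\phi\equiv 1$ on $B_{1/4}$, so that $(-\Delta_p)^s\phi\in L^\infty(\R^N)$. Put $\underline u:=\lambda\phi+u\chi_{B_1\setminus B_{1/2}}$. Your Step~2 inequality $L(a-b)-L(a)\le-(b-a)_+^{p-1}$ for $b\ge 0$ gives, weakly in $B_{1/3}$,
\[
(-\Delta_p)^s\underline u\le C\lambda^{p-1}-c\int_{B_1\setminus B_{1/2}}(u-\lambda)_+^{p-1}\dy\le C'\lambda^{p-1}-c'M^{p-1}.
\]
For $\lambda=\sigma M$ with $\sigma$ small this is $\le 0\le(-\Delta_p)^s u$. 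Since $\underline u\le u$ off $B_{1/3}$, comparison on $B_{1/3}$ (test with $(\underline u-u)_+$) gives $\inf_{B_{1/4}}u\ge\sigma M$. An adverse datum $-K$ would only change the choice to $\lambda^{p-1}=(c'M^{p-1}-K)_+/C'$.
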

\section{A pure singular problem}
In this section, we focus on the pure singular problem
\begin{equation}\label{singprob}
\tag{$\bar{\rm P}$}
\left\{
\begin{alignedat}{2}
(-\Delta_p)^s u & =a(x)u^{-\gamma}\;\; && \mbox{in}\;\;\R^N,\\
u & >0 && \mbox{in}\;\;\R^N,\\
u(x) & \to 0 && \mbox{as}\;\; |x|\to\infty,
\end{alignedat}
\right.
\end{equation}
and prove that it admits a weak solution $\bar{u}\in D^{s,p}(\R^N)$. With this aim, likewise \cite{CMSS}, let us first study the regularized problem
\begin{equation}\label{regprob}
\tag{${\rm P}_n$}
\left\{
\begin{alignedat}{2}
(-\Delta_p)^s u & =a(x)\left(u_+ +\frac{1}{n}\right)^{-\gamma}\;\; && \mbox{in}\;\;\R^N,\\
u & \ge 0 && \mbox{in}\;\;\R^N,
\end{alignedat}
\right.
\end{equation}
where $n\in\N$. Define, for very $(x,t)\in\R^N\times\R$,
$$A(x,t):=\int_0^t a(x)\left(\tau_++\frac{1}{n}\right)^{-\gamma}{\rm d}\tau.$$
One evidently has
\begin{equation}\label{new}
A(x,t)\leq\int_0^{|t|} a(x)\left(\tau_+ +\frac{1}{n}\right)^{-\gamma}{\rm d}\tau
\le\int_0^{|t|} a(x)\tau^{-\gamma}{\rm d}\tau=a(x)\frac{|t|^{1-\gamma}}{1-\gamma},
\end{equation}
while the energy functional arising from \eqref{regprob} is
\begin{equation*}
J_n(u):=\frac{1}{p}\Vert u\Vert_{s,p}^p-\int_{\R^N} A(\cdot,u)\dx,\quad u\in D^{s,p}(\R^N).
\end{equation*}
\begin{lemma}\label{functprop}
If \eqref{hypa} holds then $J_n$ is of class $C^1$ in $D^{s,p}(\R^N)$, weakly sequentially lower semi-continuous, and coercive.
\end{lemma}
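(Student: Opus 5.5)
The functional splits as $J_n=\Phi-\Psi$, with $\Phi(u):=\frac1p\Vert u\Vert_{s,p}^p$ and $\Psi(u):=\int_{\R^N}A(\cdot,u)\dx$. I will treat the two pieces separately, and the whole argument rests on Lemma \ref{weightedemb} applied with the exponent $q=1$, or more generally any $q<p^*$.

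\emph{$C^1$ regularity.} It is standard that $\Phi$ is $C^1$ on $D^{s,p}(\R^N)$ with $\Phi'=(-\Delta_p)^s$. For $\Psi$, put $g_n(x,t):=a(x)(t_++\frac1n)^{-\gamma}$. Then $0\le g_n(x,t)\le n^\gamma a(x)\le n^\gamma c_a w(x)$, and $g_n$ is continuous in $t$. First I check that $\Psi$ is well defined. By \eqref{new} and \eqref{hypa},
$$|A(x,t)|\le n^\gamma c_a w(x)|t|,$$
so $\Psi(u)$ is finite because $u\in L^1(\R^N,w)$ by Lemma \ref{weightedemb}(a). Next, Gâteaux differentiability with $\langle\Psi'(u),v\rangle=\int g_n(\cdot,u)v\dx$ follows from the mean value theorem and dominated convergence, with dominating function $n^\gamma c_a w|v|\in L^1$. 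Finally, for continuity of $\Psi'$, take $u_k\to u$ in $D^{s,p}(\R^N)$. Then
$$\Vert\Psi'(u_k)-\Psi'(u)\Vert_*\le C\Big(\int |g_n(\cdot,u_k)-g_n(\cdot,u)|^{p^{*\prime}}w^{1-p^{*\prime}}\dx\Big)^{1/p^{*\prime}},$$
using Hölder in $L^{p^*}(\R^N,w)$. Since $|g_n|^{p^{*\prime}}w^{1-p^{*\prime}}\le Cw$ is integrable, passing to a.e.\ convergent subsequences and applying dominated convergence gives the claim. Alternatively, I can estimate directly in $L^1(\R^N,w)$ against $v$, using that $\Vert v\Vert_{1,w}\le C\Vert v\Vert_{s,p}$.

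\emph{Weak sequential lower semicontinuity.} $\Phi$ is convex and continuous, hence weakly lsc. For $\Psi$, if $u_k\rightharpoonup u$ then Lemma \ref{weightedemb}(b) gives $u_k\to u$ in $L^1(\R^N,w)$. Since $|A(x,t)-A(x,\tau)|\le n^\gamma c_a w(x)|t-\tau|$, we get $\Psi(u_k)\to\Psi(u)$. So $\Psi$ is weakly sequentially continuous and $J_n$ is weakly sequentially lsc.

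\emph{Coercivity.} By \eqref{new}, the case $a\ge0$, and Hölder with respect to the finite measure $w\dx$ (note $\int w<\infty$ since $\alpha>0$), Lemma \ref{weightedemb}(a) gives
$$\Psi(u)\le\frac{c_a}{1-\gamma}\int w|u|^{1-\gamma}\dx\le C\Vert u\Vert_{1,w}^{1-\gamma}\le C'\Vert u\Vert_{s,p}^{1-\gamma}.$$
Hence $J_n(u)\ge\frac1p\Vert u\Vert_{s,p}^p-C'\Vert u\Vert_{s,p}^{1-\gamma}\to+\infty$, because $1-\gamma<1<p$. Importantly, this bound is uniform in $n$.

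The only delicate point is continuity of $\Psi'$ on the whole space, where there is no finite measure. I handle it by working systematically in the weighted spaces $L^q(\R^N,w)$, using the bound $a\le c_a w$.
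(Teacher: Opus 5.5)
Your proof is correct, but it handles the nonlinear part $\Psi$ differently from the paper.

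\textbf{The paper's route.} The paper keeps its bounds independent of $n$:
\begin{itemize}
\item well-posedness comes from $a\in L^1(\R^N)\cap L^\infty(\R^N)$ (hence $a\in L^r(\R^N)$ for every $r\ge 1$) and H\"older against $\|u\|_{p^*}$;
\item the $C^1$ property is called standard;
\item weak lower semicontinuity uses an a.e.\ convergent subsequence (local compactness plus diagonalization) together with Fatou's lemma and \eqref{new};
\item coercivity uses Young's inequality $a|u|^{1-\gamma}\le\eps a|u|^p+C_\eps a$ and Lemma \ref{weightedemb}(a) with $q=p$, giving $J_n(u)\ge(\frac1p-C\eps)\Vert u\Vert_{s,p}^p-C_\eps'$.
\end{itemize}

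\textbf{Your route.} You use that $n$ is fixed, so $0\le g_n\le n^\gamma c_a w$ and $\Psi$ is globally Lipschitz on $L^1(\R^N,w)$. The compact embedding of Lemma \ref{weightedemb}(b) with $q=1$ then makes $\Psi$ weakly sequentially continuous along the whole sequence.

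\textbf{What each approach buys.}
\begin{itemize}
\item Your lower semicontinuity step is cleaner than the paper's. The reverse Fatou inequality $\limsup_m\int A(\cdot,u_m)\dx\le\int A(\cdot,u)\dx$ needs an $m$-independent integrable majorant, or a Vitali-type uniform integrability argument. The bound \eqref{new} alone does not supply one, and the paper also has to pass through subsequences. Your argument needs neither.
\item Your coercivity bound $\Psi(u)\le C\Vert u\Vert_{s,p}^{1-\gamma}$, via H\"older for the finite measure $w\dx$, is sharper than the paper's and still uniform in $n$. It is essentially the estimate the paper uses later in Theorem \ref{regsol} to bound $\{u_n\}$.
\item Your proof that $\Psi'$ is continuous, via H\"older in $L^{p^*}(\R^N,w)$ with the integrable majorant $Cw$, is fine. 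It fills in what the paper leaves as standard.
\end{itemize}

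\textbf{One caveat.} The ``alternative'' you mention at the end of the $C^1$ part does not work as stated. Pairing against $\Vert v\Vert_{1,w}$ would require $(g_n(\cdot,u_k)-g_n(\cdot,u))/w\to 0$ in $L^\infty(\R^N)$, and a.e.\ convergence does not give that. Keep the weighted H\"older argument instead.
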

\begin{proof}
From $a\in L^1(\mathbb{R}^N)\cap L^{\infty}(\mathbb{R}^N)$ it follows $a\in L^r(\mathbb{R}^N)$ whatever $r>1$; see, e.g., \cite[Proposition A.6]{GG2}. Hence, thanks to H\"older's inequality,
\begin{equation*}
u\in D^{s,p}(\R^N)\implies\int_{\R^N} a(x)|u(x)|^{1-\gamma}\dx
\le C\|a\|_{\frac{p^*}{p^*-1+\gamma}}\|u\|_{p^*}^{1-\gamma}< \infty.
\end{equation*}
This means that $J_n$ is well defined. Further, $J_n\in C^1(D^{s,p}(\R^N))$, as a standard argument shows. Now, let $u_m\rightharpoonup u$ in $D^{s,p}(\R^N)$. The same reasoning made for proving \cite[Lemma 3.3]{CPT} ensures here that to each $R>0$ there corresponds a sub-sequence, again denoted by $\{u_m\}$, fulfilling $u_m(x)\to u(x)$ a.e. in $B_R$. Through a diagonalization procedure we next arrive at $u_m(x)\to u(x)$ for almost all $x\in\R^N$. Due to Fatou's lemma and \eqref{new} one thus has
\begin{equation*}
\limsup_{m\to+\infty}\int_{\R^N}A(x,u_m(x))\dx\le\int_{\R^N} A(x,u(x))\dx\, .    
\end{equation*}
On the other hand, Proposition 3.5 of \cite{B} entails
$$\liminf_{m\to\infty}\Vert u_m\Vert_{s,p}^p\ge\Vert u\Vert_{s,p}^p.$$
Hence,
\begin{equation*}
\begin{aligned}
\liminf_{m\to \infty} J_n(u_m) & \ge\liminf_{m\to\infty}\frac{1}{p}\Vert u_m\Vert_{s,p}^p 
-\limsup_{m\to\infty}\int_{\R^N} A(x,u_m(x))\dx\\
& \ge\frac{1}{p}\Vert u\Vert_{s,p}^p-\int_{\R^N} A(x,u(x))\dx =J_n(u),
\end{aligned}
\end{equation*}
i.e., $J_n$ is weakly sequentially lower semi-continuous. Let us finally come to coercivity. Via Young's inequality with $\eps$ and (a) of Lemma \ref{weightedemb} we obtain  
\begin{equation*}
\begin{aligned}
\int_{\mathbb{R}^N} a(x)|u(x)|^{1-\gamma}\dx 
& =\int_{\mathbb{R}^N} a(x)^{\frac{p-1+\gamma}{p}}a(x)^{\frac{1-\gamma}{p}}|u(x)|^{1-\gamma}\dx\\
& \le\eps\int_{\mathbb{R}^N} a(x) |u(x)|^{p}\dx +C_\eps\|a\|_1\\
& \le C\eps\Vert u\Vert_{s,p}^p+ C_\varepsilon\|a\|_1\, ,
\end{aligned}
\end{equation*}
whence
\begin{equation*}
J_n(u)\ge\left(\frac{1}{p}-C\eps\right)\Vert u\Vert_{s,p}^p-(C_\eps+C)\|a\|_1\quad\forall\, u\in D^{s,p}(\R^N). 
\end{equation*}
Choosing $\eps>0$ sufficiently small yields $\displaystyle{\lim_{\Vert u\Vert_{s,p}\to\infty}} J_n(u)=+\infty$, as desired. 
\end{proof}
\begin{thm}\label{regsol}
Under hypothesis \eqref{hypa}, for every $n\in \N$, problem \eqref{regprob} possesses a solution $u_n\in D^{s,p}(\R^N)_+$. The sequence $\{ u_n\}$ is bounded in $D^{s,p}(\R^N)$.
\end{thm}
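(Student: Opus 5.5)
By Lemma \ref{functprop}, $J_n$ is coercive and weakly sequentially lower semi-continuous on the reflexive space $D^{s,p}(\R^N)$. The direct method of the calculus of variations therefore gives a global minimizer $u_n\in D^{s,p}(\R^N)$. Since $J_n\in C^1$, we get $J_n'(u_n)=0$, i.e.
\[
\langle(-\Delta_p)^s u_n,\phi\rangle=\int_{\R^N} a(x)\Big((u_n)_++\frac1n\Big)^{-\gamma}\phi\dx\quad\forall\,\phi\in D^{s,p}(\R^N).
\]

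To prove $u_n\ge 0$, test with $\phi=-(u_n)_-$, which lies in $D^{s,p}(\R^N)$ because truncation does not increase the Gagliardo seminorm. The right-hand side equals $-\int a(1/n)^{-\gamma}(u_n)_-\dx\le 0$, since $a\ge0$.

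For the left-hand side we use the elementary inequality
\[
|a-b|^{p-2}(a-b)\big(b_--a_-\big)\ge |a_--b_-|^p,
\]
with $a=u_n(x)$, $b=u_n(y)$. It is checked case by case on the signs of $a$ and $b$:
\begin{itemize}
\item if both are negative, equality holds;
\item if both are nonnegative, both sides vanish;
\item if $a<0\le b$, the left-hand side is $|a-b|^{p-1}|a|\ge |a|^p$ (and symmetrically for $b<0\le a$).
\end{itemize}
This gives $\langle(-\Delta_p)^s u_n,-(u_n)_-\rangle\ge\|(u_n)_-\|_{s,p}^p$. Hence $\|(u_n)_-\|_{s,p}^p\le0$, so $(u_n)_-=0$ and $u_n\in D^{s,p}(\R^N)_+$ solves \eqref{regprob}.

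Boundedness is the only step needing some care, and it follows from the coercivity estimate in the proof of Lemma \ref{functprop}, which is uniform in $n$. By \eqref{new}, $\int A(\cdot,u)\dx\le\frac1{1-\gamma}\int a|u|^{1-\gamma}\dx$ independently of $n$. Choosing $\eps$ small as there yields
\[
J_n(u)\ge c_1\|u\|_{s,p}^p-c_2\quad\forall\,u\in D^{s,p}(\R^N),\;\forall\,n\in\N,
\]
with $c_1,c_2>0$ not depending on $n$. Since $u_n$ minimizes $J_n$, we have $J_n(u_n)\le J_n(0)=0$. Therefore $\|u_n\|_{s,p}^p\le c_2/c_1$ for every $n$, and $\{u_n\}$ is bounded in $D^{s,p}(\R^N)$.

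Alternatively, testing the equation with $u_n$ gives $\|u_n\|_{s,p}^p\le\int a u_n^{1-\gamma}\dx\le C\|a\|_{\frac{p^*}{p^*-1+\gamma}}\|u_n\|_{s,p}^{1-\gamma}$ via Sobolev's embedding. Since $1-\gamma<p$, this again bounds $\|u_n\|_{s,p}$ uniformly.
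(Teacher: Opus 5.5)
Your proof is correct, and for existence and nonnegativity it follows the paper's argument: the direct method via Lemma \ref{functprop}, then testing with $-(u_n)_-$. The only addition there is that you verify the elementary inequality case by case, where the paper just quotes it.

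For the bound in $D^{s,p}(\R^N)$ your main route is different. You observe that the coercivity estimate of Lemma \ref{functprop} is uniform in $n$: \eqref{new} removes the dependence on $n$, and Lemma \ref{weightedemb}(a) does not involve $n$. You then combine this with $J_n(u_n)\le J_n(0)=0$. The paper instead does exactly what you list as your alternative. It tests \eqref{wsolrp} with $v=u_n$, uses $(u_n+\frac1n)^{-\gamma}u_n\le u_n^{1-\gamma}$, and concludes with H\"older's inequality and the Sobolev embedding.

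Your energy argument needs only the minimality of $u_n$; it uses neither the Euler--Lagrange equation nor the sign of $u_n$. The paper's argument does use the equation and $u_n\ge 0$. In exchange, it bounds every nonnegative weak solution of \eqref{regprob}, not only minimizers, and gives the explicit estimate $\|u_n\|_{s,p}^{p-1+\gamma}\le C\|a\|_{\frac{p^*}{p^*-1+\gamma}}$.
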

\begin{proof}
By Lemma \ref{functprop}, the Weierstrass-Tonelli theorem \cite[Theorem 1.2]{S} can be applied to $J_n$. So, there exists $u_n\in D^{s,p}(\R^N)$ such that
$$J_n(u_n)=\inf_{u\in D^{s,p}(\R^N)} J_n(u)\, .$$
In particular, $u_n$ weakly solves the differential equation of \eqref{regprob}, i.e.,
\begin{equation}\label{wsolrp}
\begin{split}
\int_{\R^{2N}} & 
\frac{|u_n(x)-u_n(y)|^{p-2}(u_n(x)-u_n(y))(v(x)-v(y))}{|x-y|^{N+sp}}\dx\dy\\
& =\int_{\R^N} a\left((u_n)_++\frac{1}{n}\right)^{-\gamma} v\dx
\quad\forall\, v\in D^{s,p}(\R^N). 
\end{split}
\end{equation} 
We will show that $u_n\geq 0$. Testing \eqref{wsolrp} with $v:=-(u_n)_-$ and using the inequality 
\begin{equation*}
\pm\vert a-b\vert^{p-2}(a-b)\bigl(a_{\pm}-b_{\pm}\bigr)
\ge\lvert a_{\pm}-b_{\pm}\rvert^{p},\quad a,b\in\R
\end{equation*}
(cf. \cite[p. 89]{MS}) produces
\begin{equation*}
\begin{split}
\Vert(u_n)_-\Vert_{s,p}^p & \le\int_{\R^{2N}} 
-\frac{|u_n(x)-u_n(y)|^{p-2}(u_n(x)-u_n(y))((u_n)_-(x)-(u_n)_-(y))}{|x-y|^{N+sp}}\dx\dy\\
& =-\int_{\R^N} a\left((u_n)_+ +\frac{1}{n}\right)^{-\gamma}(u_n)_- \dx
=-n^\gamma\int_{\R^N} a\, (u_n)_-\dx\leq 0,
\end{split}
\end{equation*}
which obviously forces $u_n=(u_n)_+\geq 0$. Finally, via \eqref{wsolrp} written for $v:=u_n$, H\"older's inequality, besides the continuous embedding $D^{s,p}(\R^N) \hookrightarrow L^{p^*}(\R^N)$, we obtain
\begin{equation*}
\Vert u_n\Vert_{s,p}^p
\le\int_{\R^N} a\,|u_n|^{1-\gamma}\dx
\le\|a\|_{\frac{p^*}{p^*-1+\gamma}}\,\|u_n\|_{p^*}^{1-\gamma}
\le C\|a\|_{\frac{p^*}{p^*-1+\gamma}}\,\Vert u_n\Vert_{s,p}^{1-\gamma}\quad\forall\, n\in\N.
\end{equation*}
The boundedness of $\{u_n\}$ now follows from $p>1-\gamma>0$. 
\end{proof}
Since $D^{s,p}(\R^N)$ is reflexive, there exists $\overline{u}\in D^{s,p}(\R^N)$ such that, up to subsequences,
\begin{equation}\label{convergence}
u_n \rightharpoonup\overline{u}\;\;\text{in}\;\; D^{s,p}(\R^N),\quad
u_n(x)\to\overline{u}(x)\;\;\mbox{for almost every $x\in\R^N$;}
\end{equation}
see the proof of Lemma \ref{functprop}. Moreover,
\begin{lemma}\label{monseq}
The sequence $\{u_n\}$ given by Theorem \ref{regsol} turns out monotone non-decreasing.
\end{lemma}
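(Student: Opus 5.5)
The plan is to compare $u_n$ and $u_{n+1}$ directly through the weak formulation, testing with $(u_n-u_{n+1})_+$. This is the standard argument of \cite{CMSS}, adapted to $\R^N$. Fix $n\in\N$. Both $u_n$ and $u_{n+1}$ belong to $D^{s,p}(\R^N)_+$ by Theorem \ref{regsol}. The function $v:=(u_n-u_{n+1})_+$ lies in $D^{s,p}(\R^N)$, because truncation does not increase the Gagliardo seminorm and $|v|\le |u_n|+|u_{n+1}|\in L^{p^*}(\R^N)$. So $v$ is an admissible test function in \eqref{wsolrp}, written both for $u_n$ and for $u_{n+1}$.

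Subtracting the two identities gives
\begin{equation*}
\langle (-\Delta_p)^s u_n-(-\Delta_p)^s u_{n+1}, v\rangle
=\int_{\R^N} a\left[\Big(u_n+\frac{1}{n}\Big)^{-\gamma}-\Big(u_{n+1}+\frac{1}{n+1}\Big)^{-\gamma}\right] v\dx .
\end{equation*}
Here I use $(u_n)_+=u_n$ and $(u_{n+1})_+=u_{n+1}$. The right-hand side is non-positive. Indeed, on $\{v>0\}$ one has $u_n>u_{n+1}$, hence $u_n+\frac1n>u_{n+1}+\frac1{n+1}$. Since $t\mapsto t^{-\gamma}$ is decreasing, the bracket is negative there, and $a\ge0$.

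For the left-hand side I want a lower bound of the form $C\Vert v\Vert_{s,p}^p$. The tool is the pointwise inequality
\[
\big(|a_1|^{p-2}a_1-|b_1|^{p-2}b_1\big)\big((\xi(x)-\xi(y))_+\text{-type terms}\big),
\]
that is, I set $w:=u_n-u_{n+1}$ and compare
\[
\big(|A|^{p-2}A-|B|^{p-2}B\big)\big(v(x)-v(y)\big), \qquad A=u_n(x)-u_n(y),\quad B=u_{n+1}(x)-u_{n+1}(y),
\]
with $|v(x)-v(y)|^p$. The cleanest route is the splitting used for Lemma \ref{monfraclapl}. The map $t\mapsto|t|^{p-2}t$ is monotone, and $A-B=w(x)-w(y)$. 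The inequality $(w(x)-w(y))(w_+(x)-w_+(y))\ge |w_+(x)-w_+(y)|^2$ handles the case where both points lie in $\{w>0\}$. In that case the integrand is at least $C_p|A-B|^{p-2}(A-B)(v(x)-v(y))\ge C_p|v(x)-v(y)|^p$. When only one point lies in $\{w>0\}$, the sign of $A-B$ coincides with that of $v(x)-v(y)$, so monotonicity gives a non-negative integrand. Together these yield
\[
\langle (-\Delta_p)^s u_n-(-\Delta_p)^s u_{n+1}, v\rangle\ge 0,
\]
with equality only if $v$ is constant. A constant function in $L^{p^*}(\R^N)$ must vanish, so $v=0$.

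Combining the two sides gives $0\le$ LHS $=$ RHS $\le0$. Hence the left-hand side vanishes, so $v=0$, i.e.\ $u_n\le u_{n+1}$ a.e.\ in $\R^N$. By induction the sequence is non-decreasing. The main obstacle is justifying the strict positivity of the left-hand side in the mixed region. I expect to resolve it simply by noting that the integrand is non-negative everywhere, so vanishing of the integral forces vanishing on $\{w>0\}\times\{w>0\}$ and on the mixed region. From this, $v(x)=v(y)$ for a.e.\ pair, so $v$ is constant, hence zero.
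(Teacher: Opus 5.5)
Your argument is correct and is essentially the paper's. The paper packages it as a comparison lemma: if $\langle(-\Delta_p)^s w,\phi\rangle\le\langle(-\Delta_p)^s v,\phi\rangle$ for all $\phi\ge 0$ vanishing on $\{w\le v\}$, then $w\le v$. It proves this by testing with $(w-v)_+$ and splitting $\R^{2N}$ into the part where both points lie in $\{w>v\}$, handled by $(|a|^{p-2}a-|b|^{p-2}b)(a-b)\ge C_p|a-b|^p$, and the mixed parts, handled by monotonicity of $t\mapsto|t|^{p-2}t$. It then applies the lemma with $w=u_n$, $v=u_{n+1}$, just as you do. Your closing observation makes the last step more explicit than the paper does: $v(x)\neq v(y)$ forces a strictly positive integrand, so a vanishing integral makes $v$ a.e.\ constant and hence $0$ in $L^{p^*}(\R^N)$. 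The garbled display with ``$(\xi(x)-\xi(y))_+$-type terms'' should simply be deleted.
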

In fact, on the set $\{u_n>u_{n+1}\}$  one evidently has 
\begin{equation*}
(-\Delta_p)^s u_n=a\left(u_n+\frac{1}{n}\right)^{-\gamma}
\le a\left(u_{n+1}+\frac{1}{n+1}\right)^{-\gamma} =(-\Delta_p)^s u_{n+1}. 
\end{equation*}
Consequently, likewise \cite[Lemma 2.8]{BG}, the conclusion is achieved once we show that
\begin{lemma}
If $v,w\in D^{s,p}(\R^N)$ and $\langle(-\Delta_p)^s w,\phi\rangle\le\langle(-\Delta_p)^s v,\phi\rangle$ for every $\phi\in D^{s,p}(\R^N)_+$ fulfilling $\phi\equiv 0$ on $\{w\le v\}$  then $w\le v$ in $\R^N$.
\end{lemma}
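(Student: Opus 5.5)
Test the assumed inequality with $\phi:=(w-v)_+$. Since $D^{s,p}(\R^N)$ is a lattice (truncation does not increase the Gagliardo seminorm, and $(w-v)_+\in L^{p^*}$), this function belongs to $D^{s,p}(\R^N)_+$. It vanishes identically on $\{w\le v\}$, so it is admissible, and we obtain
\begin{equation*}
\langle(-\Delta_p)^s w-(-\Delta_p)^s v,(w-v)_+\rangle\le 0 .
\end{equation*}
The aim is to bound the left-hand side from below by $C_p\Vert(w-v)_+\Vert_{s,p}^p$. Then $(w-v)_+$ is constant, and since it lies in $L^{p^*}(\R^N)$ it vanishes, i.e. $w\le v$ in $\R^N$.

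The main step is a pointwise inequality for the integrand. For $x,y\in\R^N$ put $a:=w(x)-w(y)$, $b:=v(x)-v(y)$, $\phi:=(w-v)_+$, and write $J(t):=|t|^{p-2}t$. The integrand is then
\begin{equation*}
\bigl(J(a)-J(b)\bigr)\bigl(\phi(x)-\phi(y)\bigr)
\end{equation*}
divided by $|x-y|^{N+sp}$, and I would check that it is nonnegative and at least $C|\phi(x)-\phi(y)|^p$ up to that kernel. I plan to split into cases.

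\emph{Case $x,y\in\{w>v\}$.} Here $\phi(x)-\phi(y)=a-b$, and the inequality $(J(a)-J(b))(a-b)\ge C_p|a-b|^p$ from the proof of Lemma \ref{monfraclapl} gives the claim directly.

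\emph{Case $x,y\in\{w\le v\}$.} Both sides are zero.

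\emph{Mixed case, $w(x)>v(x)$ and $w(y)\le v(y)$.} Here $\phi(x)-\phi(y)=w(x)-v(x)>0$ and $a-b=(w-v)(x)-(w-v)(y)\ge (w-v)(x)>0$. Since $J$ is increasing, $J(a)-J(b)>0$, and $J(a)-J(b)\ge C_p' |a-b|^{p-1}$ (valid for $p\ge2$). Hence the integrand is at least
\begin{equation*}
C|a-b|^{p-1}(w-v)(x)\ge C\bigl((w-v)(x)\bigr)^p=C|\phi(x)-\phi(y)|^p .
\end{equation*}
The symmetric mixed case is identical.

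I expect the mixed case to be the only delicate point. If the bound $J(a)-J(b)\ge c|a-b|^{p-1}$ is inconvenient, I would instead just use positivity of the mixed terms. That already yields $C_p\iint_{\{w>v\}^2}|\phi(x)-\phi(y)|^p|x-y|^{-N-sp}\le0$, so $\phi$ is constant on $\{w>v\}$, and integrability forces $\phi\equiv0$. This fallback holds once $\{w>v\}$ has positive measure, and otherwise there is nothing to prove.

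Integrating over $\R^{2N}$ then gives $C\Vert(w-v)_+\Vert_{s,p}^p\le0$, so $(w-v)_+=0$ and $w\le v$.
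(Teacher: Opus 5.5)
Your proof is correct and follows the same route as the paper. Both test with $(w-v)_+$, split $\R^{2N}$ into the diagonal block (where the inequality of Lemma \ref{monfraclapl} applies) and the mixed blocks (where monotonicity of $t\mapsto|t|^{p-2}t$ controls the sign), and your quantitative mixed-case bound gives $C\Vert(w-v)_+\Vert_{s,p}^p\le 0$ more cleanly than the paper's terse closing step "due to the choice of $\phi$". Only your fallback is flawed as written: a positive constant on a set of finite positive measure does lie in $L^{p^*}$, so there you would also need strict positivity of the mixed terms, but your main argument does not rely on it.
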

\begin{proof}
To simplify formulae, define
\begin{equation*}
V(x,y):=\frac{|v(x)-v(y)|^{p-2}(v(x)-v(y))}{|x-y|^{N+sp}},\quad
W(x,y):=\frac{|w(x)-w(y)|^{p-2}(w(x)-w(y))}{|x-y|^{N+sp}}\, .
\end{equation*}
and pick $\phi:=(w-v)_+$. Using the assumptions we get
\begin{equation*}
\begin{aligned}
0 & \le\int_{\R^{2N}} V(x,y)(\phi(x)-\phi(y))\dx\dy
-\int_{\R^{2N}} W(x,y)(\phi(x)-\phi(y))\dx\dy\\
& =\int_{\{w\ge v\}}\dy\int_{\{w\ge v\}}(V(x,y)-W(x,y))(w(x)-v(x)-w(y)+v(y))\dx\\
& \hskip1.5cm+\int_{\{w<v\}}\dy\int_{\{w>v\}} (V(x,y)-W(x,y))\phi(x)\dx\\
& \hskip1.5cm-\int_{\{w>v\}}\dy\int_{\{w<v\}} (V(x,y)-W(x,y))\phi(y)\dx.
\end{aligned}
\end{equation*}
Since $L(a,b):=|a-b|^{p-2}(a-b)$, $(a,b)\in\R^2$, is increasing with respect to $a$ and decreasing in $b$, this entails
\begin{equation*}
\begin{aligned}
0 & \le\int_{\R^{2N}} (V(x,y)-W(x,y))(\phi(x)-\phi(y))\dx\dy\\
& =-\int_{\{w\ge v\}}\dy\int_{\{w\ge v\}}(V(x,y)-W(x,y))(v(x)-v(y)-w(x)+w(y))\dx\\
& \hskip1.5cm+\int_{\{w<v\}}\dy\int_{\{w>v\}}(W(x,y)-W(x,y))\phi(x)\dx\\
& \hskip1.5cm-\int_{\{w>v\}}\dy\int_{\{w<v\}}(W(x,y)-W(x,y))\phi(y)\dx\\
& =-\int_{\{w\ge v\}\times\{w\ge v\}}(V(x,y)-W(x,y))(v(x)-v(y)-w(x)+w(y))\dx\dy\le 0
\end{aligned}
\end{equation*}
by Lemma \ref{monfraclapl}. Therefore,
\begin{equation*}
\int_{\{w\ge v\}\times\{w\ge v\}} (V(x,y)-W(x,y))(\phi(x)-\phi(y))\dx\dy=0,   
\end{equation*}
which, due to the choice of $\varphi$, implies $w\le v$ on the whole $\R^N$.
\end{proof}
\begin{thm}\label{overusol}
Let \eqref{hypa} be satisfied and let $\overline{u}$ be as in \eqref{convergence}. Then $\overline{u}$ weakly solves \eqref{singprob}.      
\end{thm}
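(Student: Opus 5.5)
We must show three things for $\overline{u}$: it is positive, it decays to zero at infinity, and it satisfies the weak formulation with right-hand side $a\overline{u}^{-\gamma}$. The plan is to get positivity and decay from barriers built out of the results of \cite{BMS} and \cite{IMS}. The weak formulation then follows by passing to the limit in \eqref{wsolrp}, using the lower bound to control the singular term.

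\emph{Lower bound (positivity).} By Lemma \ref{monseq}, $u_n\ge u_1$ for all $n$, so $\overline{u}\ge u_1$ a.e. It therefore suffices to find a positive lower bound for $u_1$. We have $(-\Delta_p)^s u_1=a(u_1+1)^{-\gamma}\ge0$ and $u_1\ge0$, with $u_1\not\equiv0$ unless $a\equiv0$, a trivial case we exclude. Theorem \ref{harnack} with $K=0$, applied on balls centred anywhere, then gives $\operatorname{ess\,inf}_{B_\rho(x_0)}u_1>0$ on each ball. Hence $u_1\ge c_R>0$ on $B_R$ for every $R$.

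For the behaviour at infinity, compare $u_1$ with $c\,v_\beta$, where $\beta=\frac{N-sp}{p-1}$ and $v_\beta$ is $(-\Delta_p)^s$-harmonic in $\R^N\setminus\overline{B}_R$ by the Remark after Theorem \ref{fundsol}. Alternatively, compare with $c\,u_R$ from Proposition \ref{decay}. Applying Theorem \ref{weakcomp} on $\Omega=B_R^c$ yields
\[
\overline{u}\ge u_1\ge c|x|^{-\frac{N-sp}{p-1}} \quad\text{in } B_R^c .
\]
Altogether, $\overline{u}\ge \underline{c}\,\min\{1,|x|^{-\frac{N-sp}{p-1}}\}>0$.

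\emph{Upper bound (decay).} We seek a supersolution $\psi$ with $u_n\le\psi$ for all $n$ and $\psi\to0$ at infinity. Take $\psi=M v_\beta$ outside $B_R$, choosing $\beta\in\bigl(\frac{N-sp}{p},\frac{N-sp}{p-1}\bigr)$, which gives $C(\beta)>0$. Then
\[
(-\Delta_p)^s(Mv_\beta)=M^{p-1}C(\beta)|x|^{-\beta(p-1)-sp}.
\]
The right-hand side of \eqref{regprob} satisfies
\[
a(u_n+\tfrac1n)^{-\gamma}\le a\,u_1^{-\gamma}\le C|x|^{-N-\alpha+\gamma\frac{N-sp}{p-1}}.
\]
The condition $\alpha>\gamma\frac{N-sp}{p-1}$ makes this exponent smaller than $-N$. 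Picking $\beta$ close to $\frac{N-sp}{p-1}$, so that $\beta(p-1)+sp<N+\alpha-\gamma\frac{N-sp}{p-1}$, the supersolution inequality holds for $|x|\ge R$ once $M$ is large. This is exactly where the upper restriction $\alpha<\gamma\frac{N-sp}{p-1}+sp$ together with the range of $\beta$ must be checked.

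On $B_R$ we need $u_n\le M R^{-\beta}$, i.e. a uniform $L^\infty$ bound. This comes from the global $L^\infty$ estimates of \cite{CPT}, applied to $u_n$ with right-hand side bounded by $a u_1^{-\gamma}$, which lies in suitable $L^q$ spaces. Theorem \ref{weakcomp} on $B_R^c$ then gives $u_n\le Mv_\beta$ uniformly in $n$, and so $\overline{u}(x)\to0$ as $|x|\to\infty$. I expect this step to be the main obstacle: reconciling the exponents and obtaining the uniform $L^\infty$ bound.

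\emph{Passage to the limit.} Fix $\phi\in D^{s,p}(\R^N)$. Since $\{u_n\}$ is bounded and the solution operator is monotone, the standard argument (as in \cite{CMSS}) gives strong convergence $u_n\to\overline{u}$ in $D^{s,p}(\R^N)$. Test \eqref{wsolrp} with $u_n-\overline{u}$ and use Lemma \ref{monfraclapl}; the right-hand side is dominated by $a u_1^{-\gamma}|u_n-\overline{u}|$. The left-hand side of \eqref{wsolrp} then converges. On the right-hand side,
\[
|a(u_n+\tfrac1n)^{-\gamma}\phi|\le a u_1^{-\gamma}|\phi|\le C w^{1-\gamma'}|\phi|,
\]
which is integrable by Lemma \ref{weightedemb} and the lower bound. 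Dominated convergence then yields the weak formulation.
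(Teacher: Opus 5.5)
Your proposal is correct, and it is organized differently from the paper.

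\textbf{The paper's route.} The paper first passes to the limit in \eqref{wsolrp}. For the singular term it appeals to \cite[Lemma 2.9]{BG}. For the operator term it uses a.e.\ convergence plus $L^{p'}$-boundedness of $|x-y|^{\frac{N+sp}{p}}U_n$, which gives weak convergence without any strong convergence. Only afterwards does it prove positivity and decay for $\overline{u}$: weak Harnack on $\overline{u}$, comparison with $c\,u_1$ from Proposition \ref{decay}, $\overline{u}\in L^\infty$ from \cite{CPT}, and an upper barrier $D|x|^{-\beta}$ with $\beta$ fixed by \eqref{defbeta}.

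\textbf{What your route buys.} You obtain the lower bound first, for the single function $u_1$, and transport it to every $u_n$ and to $\overline{u}$ via Lemma \ref{monseq}. This gives a global majorant $a\,u_1^{-\gamma}|\phi|\in L^1(\R^N)$ for every $\phi\in D^{s,p}(\R^N)$. Dominated convergence then works for test functions that are not compactly supported, a point the paper's citation leaves implicit. It also avoids applying Harnack to \eqref{basiceq}, an equation whose meaning already presupposes $\overline{u}>0$. The paper's weak-convergence treatment of the operator term is lighter than your strong-convergence argument, but both work.

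\textbf{The choice of $\beta$.} Your $\beta\in(\frac{N-sp}{p},\frac{N-sp}{p-1})$ is not just different but the right side. By the formula in Theorem \ref{fundsol}, $C(\beta)$ has the sign of $N-sp-\beta(p-1)$. In contrast, \eqref{defbeta} gives $\beta(p-1)=N-sp+\alpha-\gamma\frac{N-sp}{p-1}>N-sp$. So the paper's $C(\beta)$ is negative and $|x|^{-\beta}$ is a subsolution there. Consistently, the upper bound in \eqref{decayest} decays faster than its lower bound, which cannot hold for large $|x|$. Your barrier gives a valid, if non-sharp, decay $\overline{u}\le M|x|^{-\beta}$.

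\textbf{Small repairs.}
\begin{enumerate}
\item The alternative lower barrier $c\,v_\beta$ with $\beta=\frac{N-sp}{p-1}$ fails. Since $v_\beta$ blows up at $0$, the condition $c\,v_\beta\le u_1$ on $\overline{B}_R$ cannot hold; keep the $c\,u_R$ version.
\item For $\beta<\frac{N-sp}{p-1}$ one has $\beta(p-1)+sp<N<N+\alpha-\gamma\frac{N-sp}{p-1}$ automatically. So neither closeness of $\beta$ to $\frac{N-sp}{p-1}$ nor the upper restriction on $\alpha$ is used; only $\alpha>\gamma\frac{N-sp}{p-1}$ matters.
\item The uniform-in-$n$ $L^\infty$ bound you flag as the main obstacle is unnecessary. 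Your limit passage uses only the lower bound, so do the upper comparison once, for $\overline{u}$, after the limit. That needs only $\overline{u}\in L^\infty$, which \cite{CPT} gives because $a\,\overline{u}^{-\gamma}\le a\,u_1^{-\gamma}\in L^1\cap L^\infty$.
\item The limit $\int a(u_n+\frac1n)^{-\gamma}(u_n-\overline{u})\dx\to0$ follows from $0\le u_n\le\overline{u}$ and dominated convergence with majorant $a\,u_1^{-\gamma}\overline{u}\in L^1$.
\item Integrability of $a\,u_1^{-\gamma}|\phi|$ is just H\"older with $a\,u_1^{-\gamma}\in L^{(p^*)'}$ and $\phi\in L^{p^*}$. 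The notation $w^{1-\gamma'}$ is unclear and not needed.
\end{enumerate}
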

\begin{proof}
Theorem \ref{regsol} ensures that \eqref{wsolrp} holds for every $n\in\N$, $v\in D^{s,p}(\R^n)$. Lemma \ref{monseq} and the arguments adopted in the proof of \cite[Lemma 2.9]{BG} produce
\begin{equation}\label{convrhs1}
\lim_{n\to+\infty}\int_{\R^N} a\left(u_n+\frac{1}{n}\right)^{-\gamma}v\dx=  
\int_{\R^N} a\,\overline{u}^{-\gamma}v\dx.
\end{equation}
We next claim that
\begin{equation}\label{convlhs1}
\lim_{n\to+\infty}\int_{\R^{2N}} U_n(x,y)(v(x)-v(y))\dx\dy
=\int_{\R^{2N}} \overline{U}(x,y)(v(x)-v(y))\dx\dy,
\end{equation}
where, to shorten formulae,
\begin{equation*}
\begin{aligned}
& U_n(x,u):=\frac{|u_n(x)-u_n(y)|^{p-2}(u_n(x)-u_n(y))}{|x-y|^{N+sp}},\\
&\overline{U}(x,y):=\frac{|\overline{u}(x)-\overline{u}(y)|^{p-2}(\overline{u}(x)-\overline{u}(y))}{|x-y|^{N+sp}}.
\end{aligned}
\end{equation*}
In fact, by Theorem \ref{regsol} again, the sequence
$\left\{|x-y|^{\frac{N+sp}{p}} U_n(x,y)\right\}$
turns out bounded in $L^{p'}(\mathbb{R}^{2N})$, while \eqref{convergence} entails
$$\lim_{n\to+\infty}|x-y|^{\frac{N+sp}{p}} U_n(x,y)=|x-y|^{\frac{N+sp}{p}}\,\overline{U}(x,y)\;\;\mbox{for almost every}\;(x,y)\in\R^{2N}.
$$
So, taking a sub-sequence if necessary, 
$$|x-y|^{\frac{N+sp}{p}} U_n(x,y)\rightharpoonup |x-y|^{\frac{N+sp}{p}}\,\overline{U}(x,y)\;\;\mbox{in}\; L^{p'}(\R^{2N}),$$
which yields \eqref{convlhs1}, because the function $|x-y|^{-\frac{N+sp}{p}}(v(x)-v(y))$ belongs to $L^{p}(\R^{2N})$, as $v\in D^{s,p}(\R^N)$. Now, through \eqref{convrhs1}--\eqref{convlhs1} we see that $\overline{u}$ weakly solves the equation 
\begin{equation}\label{basiceq}
(-\Delta_p)^s u= a(x) u^{-\gamma}\;\;\mbox{in}\;\;\R^N.    
\end{equation}
Since one evidently has $\overline{u}\ge 0$, the conclusion will follow from the next lemma.
\end{proof}
\begin{lemma}
If $\overline{u}\in D^{s,p}(\R^N)_+$ is a solution of \eqref{basiceq} then 
\begin{equation}\label{decayest}
\tilde{c}|x|^{-\frac{N-sp}{p-1}}\le \overline{u}(x)\le D|x|^{\frac{1}{p-1}\left(-N-\alpha+\gamma  \frac{N-sp}{p-1}-sp\right)}\quad\text{for almost every $x\in B_1^c$},    
\end{equation}
with $\tilde{c}, D >0$ suitable constants. So, $\overline{u}(x)\to 0$ as $|x|\to\infty$.
\end{lemma}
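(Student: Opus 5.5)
The plan is to prove the two bounds in \eqref{decayest} separately, each by a comparison argument on the exterior domain $\Omega:=\R^N\setminus\overline{B}_R$ via Theorem \ref{weakcomp}. Both barriers will be built from Proposition \ref{decay} and Theorem \ref{fundsol}. Before comparing, two facts about $\overline{u}$ are needed. First, $\overline{u}$ is globally bounded; this should follow from the $L^\infty$ estimates of \cite{CPT}, or from a Moser/De Giorgi iteration. Second, $\overline{u}$ is bounded away from zero on bounded sets.

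\textbf{Lower bound.} I would first show $\overline{u}>0$ quantitatively on $\overline{B}_1$. Since $\overline{u}$ solves \eqref{basiceq}, it satisfies $(-\Delta_p)^s\overline{u}\ge 0$ weakly, with $\overline{u}\ge0$ and $\overline{u}\not\equiv0$. The function cannot vanish identically, because $a\not\equiv 0$ and the monotone limit satisfies $\overline{u}\ge u_1$, where $u_1$ is nontrivial. Choose $R$ large so that $\overline{u}$ has positive $L^{p-1}$-average on $B_R\setminus B_{R/2}$. Theorem \ref{harnack} with $K=0$ then gives $\inf_{B_{R/4}}\overline{u}\ge m>0$; since $R$ is arbitrary, this holds on any fixed ball, in particular on $\overline{B}_1$.

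Next, compare $\overline{u}$ with $m\,u_1$ from Proposition \ref{decay} on $\Omega=\R^N\setminus\overline{B}_1$. On $\Omega^c=\overline{B}_1$ we have $m u_1=m\le\overline{u}$. In $\Omega$,
$$(-\Delta_p)^s(m u_1)=0\le a\overline{u}^{-\gamma}=(-\Delta_p)^s\overline{u}.$$
Both functions belong to $D^{s,p}(\R^N)\subseteq\tilde D^{s,p}(\Omega)$, so Theorem \ref{weakcomp} yields $\overline{u}\ge m u_1\ge \frac{m}{C}|x|^{-\frac{N-sp}{p-1}}$ by \eqref{decayu1}. This gives the lower bound with $\tilde c:=m/C$.

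\textbf{Upper bound.} Set $\beta:=\frac{1}{p-1}\bigl(N+\alpha-\gamma\frac{N-sp}{p-1}+sp\bigr)$, which is equivalent to $\beta(p-1)+sp=N+\alpha+sp-\gamma\frac{N-sp}{p-1}$. Plugging the lower bound into the right-hand side and using \eqref{hypa} gives, on $B_1^c$,
$$a\overline{u}^{-\gamma}\le c_a\tilde c^{-\gamma}|x|^{-N-\alpha}|x|^{\gamma\frac{N-sp}{p-1}}=C_1|x|^{-(N+\alpha-\gamma\frac{N-sp}{p-1})}.$$
The barrier is $V:=D\,v_{\beta'}$ from Theorem \ref{fundsol}, with exponent $\beta'$ chosen so that $\beta'(p-1)+sp=N+\alpha-\gamma\frac{N-sp}{p-1}$. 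Then $(-\Delta_p)^sV=D^{p-1}C(\beta')|x|^{-\beta'(p-1)-sp}$, and this dominates $C_1|x|^{-(\beta'(p-1)+sp)}$ once $D$ is large, provided $C(\beta')>0$. The restriction on $\alpha$ in \eqref{hypa} is exactly what places $\beta'$ in the admissible range. Indeed, $\alpha>\gamma\frac{N-sp}{p-1}$ gives $\beta'>\frac{N-sp}{p-1}$, hence also $\beta'>\frac{N-sp}{p}$ and $C(\beta')>0$, since $N-sp-\beta'(p-1)<0$ makes the bracket in $C(\beta)$ negative. The condition $\alpha<\gamma\frac{N-sp}{p-1}+sp$ gives $\beta'<\frac{N}{p-1}$.

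It remains to handle $\Omega^c=\overline{B}_1$. Since $\overline{u}\le\|\overline{u}\|_\infty$ there, I would replace $V$ by $\max\{V,\|\overline{u}\|_\infty\}$ on $\overline{B}_1$, or simply enlarge $D$. Modifying a barrier inside $\Omega^c$ only increases $(-\Delta_p)^sV$ in $\Omega$ when raising values (a nonlocal monotonicity), so the supersolution property is kept. Theorem \ref{weakcomp} then gives $\overline{u}\le D|x|^{-\beta'}$ on $B_1^c$. Note that $-\beta'$ equals $\frac{1}{p-1}\bigl(-N-\alpha+\gamma\frac{N-sp}{p-1}+sp\bigr)$, whereas the statement as printed has $-sp$ in the exponent. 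The exponent coming from this computation is $+sp$, so I would re-check this sign against the intended claim.

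The main obstacles are the $L^\infty$ bound used in the outer comparison and checking that $v_{\beta'}$ (truncated) lies in $\tilde D^{s,p}(\Omega)$. The latter holds because $\beta'>\frac{N-sp}{p}$ ensures $|x|^{-\beta'}\in L^{p^*}(B_1^c)$ and gives finite Gagliardo seminorm away from the origin. For the $L^\infty$ bound, I would first try to cite \cite{CPT} directly, treating $a\overline{u}^{-\gamma}$ via the lower bound as an $L^q$ right-hand side on compact sets, which is harmless because $\overline{u}\ge m$ there.
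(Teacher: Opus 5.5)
The lower half of your plan is sound. It follows the paper's own route: Harnack for $\inf_{B_1}\overline u>0$, then comparison with a multiple of the $u_1$ of Proposition \ref{decay}. You are also right that \eqref{defbeta} yields $+sp$, not $-sp$, in the exponent.

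The upper half, with the barrier of Theorem \ref{fundsol} at exactly the exponent fixed by \eqref{defbeta} (your $\beta'$), breaks at the claim $C(\beta')>0$. Your own justification refutes it. For $\rho\in(0,1)$ the factors $\rho^{sp-1}$, $|1-\rho^{\beta'}|^{p-1}$ and $\Phi(\rho)$ are positive, while you show the bracket $1-\rho^{N-sp-\beta'(p-1)}$ is negative, so $C(\beta')<0$. Hence $D|x|^{-\beta'}$ is a strict \emph{sub}solution in $\R^N\setminus\overline{B}_1$, and no $D$ gives $(-\Delta_p)^s\overline u\le(-\Delta_p)^s(D|x|^{-\beta'})$ there.

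This cannot be repaired. Because $\beta'>\frac{N-sp}{p-1}$, the inequality $\overline u\le D|x|^{-\beta'}$ contradicts your lower bound $\overline u\ge\tilde c|x|^{-\frac{N-sp}{p-1}}$ as $|x|\to\infty$. So the two halves of \eqref{decayest} are mutually incompatible, with either sign of $sp$. The paper's proof has the same defect: its choice $D>(\hat c/C(\beta))^{1/(p-1)}$ tacitly assumes $C(\beta)>0$.

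What survives is $\overline u(x)\to0$, which is all that is used afterwards. Take $\beta\in\bigl(\frac{N-sp}{p},\frac{N-sp}{p-1}\bigr)$ instead. Then $C(\beta)>0$ and $\beta(p-1)+sp<N<N+\alpha-\gamma\frac{N-sp}{p-1}$. So \eqref{auest} gives $a\overline u^{-\gamma}\le D^{p-1}C(\beta)|x|^{-\beta(p-1)-sp}$ on $B_1^c$ for $D$ large. Taking also $D\ge\|\overline u\|_\infty$ (recall $|x|^{-\beta}\ge1$ on $\overline{B}_1$), Theorem \ref{weakcomp} yields $\overline u\le D|x|^{-\beta}$ on $B_1^c$. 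The expected upper rate is $|x|^{-\frac{N-sp}{p-1}}$, matching the lower bound.

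Two smaller corrections:
\begin{enumerate}
\item Raising a barrier on $\Omega^c$ \emph{decreases} $(-\Delta_p)^sV$ in $\Omega$, because $V(x)-V(y)$ gets smaller. So your $\max\{V,\|\overline u\|_\infty\}$ modification goes the wrong way. Enlarging $D$, or lowering values on $\overline{B}_1$ as the paper does, is the correct direction.
\item Your first definition of $\beta$ does not agree with its claimed equivalent form, though you never use it.
\end{enumerate}
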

\begin{proof}
By Theorem \ref{harnack}, written for $K:=0$ and $R:=4$, one has
\begin{equation*}
\inf_{B_1} \overline{u}\ge\sigma\left(\fint_{B_4\setminus B_2} \overline{u}^{p-1}\dx\right)^{\frac{1}{p-1}}
=\frac{\sigma}{[\omega_N\left(4^N-2^N\right)]^{\frac{1}{p-1}}}
\left(\int_{B_4\setminus B_2} \overline{u}^{p-1}\dx\right)^{\frac{1}{p-1}}.
\end{equation*}
Hence,
\begin{equation}\label{lowest}
\inf_{B_1} \overline{u}\ge\sigma M,   
\end{equation}
where $\sigma\in (0,1)$, $M:=M(N,p,\| u\|_{L^p(B_4)})>0$. Let $u_1\in D^{s,p}(\R^N)$ be as in Proposition \ref{decay}, with $R:=1$, and let $0<c<\sigma M$. The function $\tilde{u}_1:= c u_1$ enjoys the properties
\begin{equation*}
(-\Delta_p)^s\tilde{u}_1=0\;\;\text{in}\;\;\R^N\setminus\overline{B}_1,\quad
\tilde{u}_1\equiv c\;\;\text{on}\;\;\overline{B}_1.
\end{equation*}
Moreover, thanks to \eqref{lowest},
$$\tilde{u}_1\le \overline{u}\;\;\text{in}\;\;\overline{B}_1,\qquad
(-\Delta_p)^s\tilde{u}_1
\le(-\Delta_p)^s\overline{u}\;\;\text{in}\;\;\R^N\setminus\overline{B}_1.$$
Because of Theorem \ref{weakcomp} and \eqref{decayu1}, this implies
\begin{equation}\label{lowestu}
\tilde{c}|x|^{-\frac{N-sp}{p-1}}\le\tilde{u}_1(x)\le\overline{u}(x)\quad\forall\, x\in B_1^c,    
\end{equation}
where $\tilde{c}:=\tilde{c}(N,p,s,C,c)>0$. Via \eqref{hypa} and \eqref{lowestu} we next obtain
\begin{equation}\label{auest}
a(x)\overline{u}(x)^{-\gamma}
\le\frac{c_a}{1+|x|^{N+\alpha}}\tilde{c}^{-\gamma}|x|^{\gamma\frac{N-sp}{p-1}}
\le\frac{\hat{c}}{|x|^{N+\alpha-\gamma\frac{N-sp}{p-1}}},\quad x\in B_1^c,
\end{equation}
for some $\hat{c}>0$. Putting \eqref{hypa}, \eqref{lowest}, and \eqref{auest} together gives $a\, \overline{u}^{-\gamma}\in L^1(\R^N)\cap L^\infty(\R^N)$. In fact,
\begin{equation*}
\begin{split}
\int_{\R^N} a\, \overline{u}^{-\gamma}\dx\le\|a\|_\infty \left(\inf_{B_1} \overline{u}\right)^{-\gamma}|B_1|
+\hat c\int_{B_1^c}\frac{1}{|x|^{N+\alpha-\gamma\frac{N-sp}{p-1}}}\dx<\infty,\\
\sup_{x\in\R^N}a(x)\overline{u}(x)^{-\gamma}\le\|a\|_\infty \left(\inf_{B_1} \overline{u}\right)^{-\gamma}+
\hat c\,\sup_{x\in B_1^c}\frac{1}{|x|^{N+\alpha-\gamma\frac{N-sp}{p-1}}}<\infty.
\end{split}
\end{equation*}
Consequently,
\begin{equation}\label{sumau}
a\, \overline{u}^{-\gamma}\in L^\tau(\R^N)\;\;\mbox{whathever}\;\;\tau\in[1,\infty],
\end{equation}
which entails
\begin{equation}\label{boundoverlu}
\overline{u}\in L^\infty(\R^N);     
\end{equation} 
see \cite[Theorem 1.2]{CPT}. Let $\beta\in\R$ satisfy
\begin{equation}\label{defbeta}
\beta(p-1)+sp=N+\alpha-\gamma\frac{N-sp}{p-1}.
\end{equation}
One has $\beta<\frac{N}{p-1}$, because $\alpha<sp+\gamma\frac{N-sp}{p-1}$. So, Theorem \ref{fundsol} and Proposition \ref{decay} ensure that
$w(x):=|x|^{-\beta}\chi_{\R^N\setminus\overline{B}_1}(x)+u_1(x)\chi_{\overline{B}_1}(x)$ weakly solves the problem
\begin{equation*}
\left\{
\begin{alignedat}{2}
(-\Delta_p)^s w & =C(\beta)|x|^{-\beta(p-1)-sp}\;\; && \text{in}\;\;\R^N\setminus\overline{B}_1,\\
w & \equiv 1\;\; && \text{on}\;\;\overline{B}_1.
\end{alignedat}
\right.
\end{equation*}
If $\overline{w}:=Dw$, where $D>0$ complies with
$$D>\max\left\{\left(\frac{\hat c}{C(\beta)}\right)^{\frac{1}{p-1}},\sup_{B_1}\overline{u}\right\},$$
then, due to \eqref{auest} and \eqref{defbeta},
$$\overline{u}(x)\le\overline{w}(x)\;\;\text{in}\;\;\overline{B}_1,\quad
(-\Delta_p)^s\overline{u}
\le(-\Delta_p)^s\overline{w}\;\;\text{in}\;\;\R^N\setminus\overline{B}_1.$$
Now, Theorem \ref{weakcomp} yields $\overline{u}\le\overline{w}$ on the whole $\R^N$. Thus,
\begin{equation}\label{upperestu}
\overline{u}(x)\le D|x|^{-\beta}\;\;\forall\, x\in B_1^c.  
\end{equation}
Estimate \eqref{decayest} follows from \eqref{lowestu}, \eqref{defbeta}, and \eqref{upperestu}.
\end{proof}
\begin{rmk}
Obviously, $\overline{u}$ weakly solves the equation $(-\Delta_p)^s u=a(x) u^{-\gamma}$ on every open ball $B_{R}(x)$. By \cite[ Theorem 5.4]{IMS}, there exist $\alpha\in (0,1)$ and $C > 0$, not dependent on $x$, such that
\begin{equation}\label{osc_estimate}
\operatorname{osc}_{B_r(x)}\overline{u}
\leq C\left[\left(\|a\overline{u}^{-\gamma}\|_\infty R^{ps}\right)^{\frac{1}{p-1}}
+\|\overline{u}\|_{L^\infty(B_{R}(x))}+\operatorname{Tail}(\overline{u};x,R) \right]\left(\frac{r}{R}\right)^\alpha
\end{equation}
for all $r\in (0,R)$, where
$$\operatorname{Tail}(\overline{u};x,R):=\left( R^{ps}\int_{\R^N\setminus B_{R}(x)} \frac{|\overline{u}(y)|^{p-1}}{|x-y|^{N+ps}}\,\dy \right)^{\frac{1}{p-1}}.$$
Since \eqref{boundoverlu} entails
$$\operatorname{Tail}(\overline{u};x,R)\leq C_1\|\overline{u}\|_\infty,$$
using \eqref{osc_estimate} we achieve
$$\operatorname{osc}_{B_r(x)}\overline{u}\leq C_2
\left[\left(\|a\overline{u}^{-\gamma}\|_\infty R^{ps}\right)^{\frac{1}{p-1}}+ \|\overline{u}\|_\infty\right]\left(\frac{r}{R}\right)^\alpha,$$
with $C_2>0$ independent of $x$. Thus, $\overline{u}\in C^{0,\alpha}_{\mathrm{loc}}(\R^N)$.
\end{rmk}
\section{Existence of solutions}
Let $\overline{u}$ be the solution to \eqref{singprob} given by Theorem \ref{overusol}. Consider the truncated problem
\begin{equation}\label{trunprob}
\left\{
\begin{alignedat}{2}
(-\Delta_p)^s u & =\bar{f}(x,u)\;\; && \mbox{in}\;\;\R^N,\\
u & \ge0 && \mbox{in}\;\;\R^N,\\
%
\end{alignedat}
\right.
\end{equation}
where 
\begin{equation*}
\bar{f}(x,t):=f(x,\max\{\overline{u}(x),t\}),\;\; (x,t)\in\Omega\times\R.     
\end{equation*}
Thanks to \eqref{hypf} one has 
\begin{equation}\label{estbarf}
0\leq\bar{f}(\cdot,t)
\le a\,(\max\{\overline{u},t\})^{-\gamma}+a\,(\max\{\overline{u},t\})^r
\le a\,\overline{u}^{-\gamma}+a\,(\overline{u}^r+|t|^r)
\end{equation}
while the energy functional associated with \eqref{trunprob} is 
\begin{equation*}
\bar{J}(u):=
\frac{1}{p}\Vert u\Vert_{s,p}^p-\int_{\Omega}\Big(\int_0^{u(x)}\bar{f}(x,t)\dt\Big)\dx\quad
\forall\, u\in D^{s,p}(\R^N).
\end{equation*}
\begin{lemma}\label{tildeu}
Suppose \eqref{hypa} and \eqref{hypf} hold. Then $\bar{J}$ admits a minimizer $\tilde{u}\in D^{s,p}(\R^N)_+$, which weakly solves \eqref{trunprob}.
\end{lemma}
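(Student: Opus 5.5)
We follow the scheme of Lemma \ref{functprop} and Theorem \ref{regsol}: first show that $\bar J$ is well defined, $C^1$, weakly sequentially lower semi-continuous and coercive on $D^{s,p}(\R^N)$. Then minimize via the Weierstrass--Tonelli theorem, and finally check that the minimizer is non-negative.

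\emph{Well-posedness and growth.} Put $\bar F(x,t):=\int_0^t\bar f(x,\tau)\,{\rm d}\tau$. By \eqref{estbarf},
$$|\bar F(x,t)|\le \big(a\overline{u}^{-\gamma}+a\overline{u}^r\big)|t|+\frac{a}{r+1}|t|^{r+1}.$$
We control the three terms as follows.
\begin{itemize}
\item By \eqref{sumau}, $a\overline{u}^{-\gamma}\in L^{(p^*)'}(\R^N)$. Hence $\int a\overline{u}^{-\gamma}|u|\le C\|u\|_{s,p}$ by H\"older and $D^{s,p}(\R^N)\hookrightarrow L^{p^*}(\R^N)$.
\item Since $\overline{u}\in L^\infty(\R^N)$ by \eqref{boundoverlu}, we have $a\overline{u}^r\le \|\overline{u}\|_\infty^r a\in L^{(p^*)'}(\R^N)$. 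So this term is handled in the same way.
\item For the last term, $a\le c_a w$ by \eqref{hypa} and $r+1<p<p^*$. Lemma \ref{weightedemb}(a) with $q=r+1$ then gives $\int a|u|^{r+1}\le C\|u\|_{s,p}^{r+1}$.
\end{itemize}
Consequently $\bar J$ is finite and
$$\bar J(u)\ge \frac1p\|u\|_{s,p}^p-C_1\|u\|_{s,p}-C_2\|u\|_{s,p}^{r+1}.$$
Since $1<r+1<p$, this shows coercivity. The $C^1$ property follows from the standard Nemytskii argument using the same bounds, together with the Carath\'eodory property of $\bar f$ (continuity of $t\mapsto\max\{\overline{u}(x),t\}$). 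Its derivative is
$$\langle\bar J'(u),v\rangle=\langle(-\Delta_p)^su,v\rangle-\int\bar f(\cdot,u)v\dx .$$

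\emph{Weak lower semicontinuity.} Let $u_m\rightharpoonup u$ in $D^{s,p}(\R^N)$. The norm part is handled by \cite[Proposition 3.5]{B}, exactly as in Lemma \ref{functprop}. For the reaction part we argue separately on the two pieces of $\bar F$.
\begin{itemize}
\item The linear-growth piece $\int(a\overline{u}^{-\gamma}+a\overline{u}^r)|u|$ is weakly continuous. Its weight $g:=a(\overline{u}^{-\gamma}+\overline{u}^r)$ lies in $L^1\cap L^\infty(\R^N)$. Moreover $g\le C_3w$ on $B_1^c$, because $\overline{u}^{-\gamma}\le C|x|^{\gamma\frac{N-sp}{p-1}}$ there by \eqref{decayest}. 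Rather than build a new weight from this, we use Lemma \ref{weightedemb}(b) with $q=p$: it gives $u_m\to u$ in $L^p(\R^N,w)$. Hence $\int g|u_m-u|\le \|g/w\|_{L^{p'}(w)}\|u_m-u\|_{p,w}$.
\item We still need $g/w\in L^{p'}(\R^N,w)$, i.e. $\int g^{p'}w^{1-p'}<\infty$. Near infinity, \eqref{auest} gives $g/w\lesssim|x|^{\gamma\frac{N-sp}{p-1}}$, so the integrand behaves like $|x|^{p'\gamma\frac{N-sp}{p-1}-N-\alpha}$. This may fail to be integrable for the given range of $\alpha$.
\item If it fails, we instead split $\R^N=B_R\cup B_R^c$. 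On $B_R$ we use local compactness. On $B_R^c$ we obtain a uniform small tail from H\"older with $\|g\|_{L^{(p^*)'}(B_R^c)}\to0$, since $\sup_m\|u_m\|_{p^*}<\infty$. This route suffices, and it is the one we will use.
\item The term $\int a|u|^{r+1}$ is continuous along $u_m$ by the compact embedding of Lemma \ref{weightedemb}(b) with $q=r+1<p^*$.
\end{itemize}
Together these give $\int\bar F(\cdot,u_m)\to\int\bar F(\cdot,u)$.

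\emph{Conclusion.} We get a global minimizer $\tilde u$, and $\bar J'(\tilde u)=0$, i.e. $\tilde u$ weakly solves the equation in \eqref{trunprob}. To prove $\tilde u\ge0$ we test with $-\tilde u_-$, using the inequality from \cite[p. 89]{MS} as in Theorem \ref{regsol}:
$$\|\tilde u_-\|_{s,p}^p\le-\int\bar f(\cdot,\tilde u)\tilde u_-\dx\le0,$$
since $\bar f\ge0$. Hence $\tilde u_-=0$.

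The main obstacle is the weak lower semicontinuity, specifically the tail control of the linear term involving $a\overline{u}^{-\gamma}$. We settle it with the $B_R$ / $B_R^c$ splitting described above.
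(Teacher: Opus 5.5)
Your proposal is correct and follows the paper's route: coercivity of $\bar J$ from \eqref{estbarf}, \eqref{sumau}--\eqref{boundoverlu} and Lemma \ref{weightedemb}, then the Weierstrass--Tonelli theorem, then $\tilde u\ge 0$ from $\bar f\ge 0$ by testing with $-\tilde u_-$. Bounding $a\overline{u}^r$ via $\overline{u}\in L^\infty$ instead of Young's inequality is a harmless simplification, and you additionally prove the weak lower semicontinuity that the paper leaves implicit; your $B_R/B_R^c$ splitting handles this correctly once you add a.e. convergence along subsequences and a generalized dominated convergence step (your bounds dominate $\bar F$, they do not split it), but delete the false aside ``$g\le C_3w$ on $B_1^c$'', since $a\overline{u}^{-\gamma}/w$ can grow like $|x|^{\gamma\frac{N-sp}{p-1}}$.
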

\begin{proof}
Since $r<p-1$, recalling \eqref{sumau}--\eqref{boundoverlu} and using \eqref{estbarf} we easily see that $\bar{J}$ turns out both well-defined and of class $C^1$. Moreover, H\"older's inequality, Young's inequality, Lemma \ref{weightedemb}, besides the continuous embedding $D^{s,p}(\R^N)\hookrightarrow L^{p^*}(\R^N)$, produce
\begin{equation*}
\begin{split}
\bar{J}(u) 
& \ge\frac{1}{p}\Vert u\Vert_{s,p}^p-\int_{\R^N} a\overline{u}^{-\gamma}|u|\dx
-\int_{\R^N} a\overline{u}^r|u|\dx-\frac{1}{r+1}\int_{\R^N}a|u|^{r+1}\dx\\
& \ge\frac{1}{p}\Vert u\Vert_{s,p}^p-\|u\|_{p^*}\|a\overline{u}^{-\gamma}\|_{(p^*)'}
-c_1\int_{\R^N} a\overline{u}^p\dx-c_2\int_{\R^N} a|u|^{\frac{p}{p-r}}\dx
-c_3\Vert u\Vert_{s,p}^{r+1}\\
& \ge\frac{1}{p}\Vert u\Vert_{s,p}^p-C\left(\|a \overline{u}^{-\gamma}\|_{(p^*)'}\Vert u\Vert_{s,p}-\Vert \overline{u}\Vert_{s,p}^p-\Vert u\Vert_{s,p}^{\frac{p}{p-r}}-\Vert u\Vert_{s,p}^{r+1}\right),
\;\; u\in D^{s,p}(\R^N),
\end{split}
\end{equation*}
with appropriate $C>0$. Therefore, $\bar{J}$ is coercive. Now, the Weierstrass-Tonelli theorem ensures that it attains its infimum, i.e., there exists $\tilde{u}\in D^{s,p}(\R^N)$ fulfilling
$$\bar{J}(\tilde{u})=\inf_{u\in D^{s,p}(\R^N)}\bar{J}(u).$$
Finally, from $\bar{f}(x,t)\ge 0$ in $\R^N\times\R$ it easily follows $\tilde{u}\ge0$. 
\end{proof}
\begin{thm}\label{exres}
Under conditions \eqref{hypa} and \eqref{hypf}, problem \eqref{prob} possesses a weak solution.
\end{thm}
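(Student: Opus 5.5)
We take the minimizer $\tilde{u}\in D^{s,p}(\R^N)_+$ of $\bar{J}$ given by Lemma \ref{tildeu}, which weakly solves \eqref{trunprob}. We will show three things: $\tilde{u}\ge\overline{u}$, so the truncation is inactive; $\tilde{u}>0$; and $\tilde{u}(x)\to 0$ as $|x|\to\infty$.

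\textbf{Step 1: $\tilde{u}\ge\overline{u}$.} On the set $\{\overline{u}>\tilde{u}\}$ we have $\bar{f}(x,\tilde{u})=f(x,\overline{u})\ge a\,\overline{u}^{-\gamma}$ by \eqref{hypf}. Hence, for every $\phi\in D^{s,p}(\R^N)_+$ vanishing on $\{\overline{u}\le\tilde{u}\}$,
$$\langle(-\Delta_p)^s\overline{u},\phi\rangle=\int_{\R^N}a\,\overline{u}^{-\gamma}\phi\dx\le\int_{\R^N}\bar{f}(\cdot,\tilde{u})\phi\dx=\langle(-\Delta_p)^s\tilde{u},\phi\rangle .$$
The comparison lemma stated just before Theorem \ref{overusol}, applied with $w:=\overline{u}$ and $v:=\tilde{u}$, gives $\overline{u}\le\tilde{u}$. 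Two consequences follow.
\begin{itemize}
\item[(i)] $\bar{f}(x,\tilde{u})=f(x,\tilde{u})$, so $\tilde{u}$ weakly solves the equation in \eqref{prob}.
\item[(ii)] By \eqref{lowest} and \eqref{lowestu}, $\tilde{u}\ge\overline{u}>0$ in $\R^N$, with the explicit lower bound $\tilde{c}|x|^{-\frac{N-sp}{p-1}}$ on $B_1^c$.
\end{itemize}

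\textbf{Step 2: decay at infinity (main obstacle).} We first bound the right-hand side $g:=f(\cdot,\tilde{u})$. By \eqref{hypf}, $g\le a\tilde{u}^{-\gamma}+a\tilde{u}^r\le a\overline{u}^{-\gamma}+a\tilde{u}^r$. The term $a\overline{u}^{-\gamma}$ lies in every $L^\tau$ by \eqref{sumau}. For the term $a\tilde{u}^r$ we use $a\in L^1\cap L^\infty$ together with $\tilde{u}\in L^{p^*}$ and $r<p-1<p^*$; this puts $a\tilde{u}^r$ in some $L^\tau$ with $\tau>N/(sp)$ and in $L^{(p^*)'}$. The subcritical growth should then let us invoke \cite[Theorem 1.2]{CPT}, as was done for \eqref{boundoverlu}, to obtain $\tilde{u}\in L^\infty(\R^N)$. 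If a direct appeal is not possible, we would run a Moser-type iteration in which the $L^{p^*}$ integrability of $a\tilde{u}^r$ is improved step by step.

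Once $\tilde{u}\in L^\infty$, we get on $B_1^c$
$$g\le \hat c|x|^{-(N+\alpha-\gamma\frac{N-sp}{p-1})}+c_a\|\tilde{u}\|_\infty^r|x|^{-N-\alpha}\le \hat c'|x|^{-(N+\alpha-\gamma\frac{N-sp}{p-1})},$$
using $\gamma\frac{N-sp}{p-1}>0$ for the second inequality.

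We then repeat the supersolution argument of the decay lemma. Take $\beta$ as in \eqref{defbeta} and the barrier $\overline{w}=Dw$, choosing
$$D>\max\Bigl\{\bigl(\hat c'/C(\beta)\bigr)^{\frac{1}{p-1}},\ \|\tilde{u}\|_\infty\Bigr\}.$$
Then $\tilde{u}\le\overline{w}$ on $\overline{B}_1$ and $(-\Delta_p)^s\tilde{u}\le(-\Delta_p)^s\overline{w}$ in $\R^N\setminus\overline{B}_1$. Theorem \ref{weakcomp} now gives $\tilde{u}\le D|x|^{-\beta}$ on $B_1^c$. Since $\beta>0$, because $\alpha>\gamma\frac{N-sp}{p-1}$ makes the right-hand side of \eqref{defbeta} exceed $N-sp$, we conclude $\tilde{u}(x)\to0$ as $|x|\to\infty$.

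Two points need checking.
\begin{itemize}
\item[(a)] The hypothesis of Theorem \ref{fundsol} requires $\beta>\frac{N-sp}{p}$. This follows from $\beta(p-1)>N-sp$ when $p\ge2$ only if $\frac{N-sp}{p-1}\ge\frac{N-sp}{p}$, which is true.
\item[(b)] $\tilde{u}$ and $\overline{w}$ must belong to $\tilde{D}^{s,p}(B_1^c)$. This holds for $\tilde{u}\in D^{s,p}(\R^N)$, and for $\overline{w}$ exactly as in Section 3.
\end{itemize}

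The $L^\infty$ bound for $\tilde{u}$ is the delicate point. Everything else reuses the machinery of Section 3, with $\tilde{u}\ge\overline{u}$ supplying positivity and the control of the singular term.
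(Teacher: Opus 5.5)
Your Step 1 is the paper's argument (the paper tests directly with $(\overline{u}-\tilde{u})^+$, which is what the comparison lemma packages), and Step 2 follows the paper's route as well. However, the barrier step in Step 2 does not work. You inherited this defect from the paper, which uses the same $\beta$ in the decay lemma of Section 3 and in its proof of Theorem \ref{exres}.

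By \eqref{defbeta} and \eqref{hypa}, $N-sp-\beta(p-1)=-\bigl(\alpha-\gamma\frac{N-sp}{p-1}\bigr)<0$; this is the inequality $\beta(p-1)>N-sp$ you record in (a). Hence $\rho^{N-sp-\beta(p-1)}>1$ for $\rho\in(0,1)$. All other factors in the integrand defining $C(\beta)$ in Theorem \ref{fundsol} are positive, so $C(\beta)<0$. Thus $|x|^{-\beta}$ is a strict \emph{sub}solution in $\R^N\setminus\overline{B}_1$. Cutting it down to $1$ on $\overline{B}_1$ only adds a positive contribution of order $|x|^{-N-sp}$. That cannot offset $C(\beta)|x|^{-\beta(p-1)-sp}$ at infinity, since $\beta(p-1)<N$. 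So the requirement $D>(\hat{c}'/C(\beta))^{1/(p-1)}$ is meaningless, and $(-\Delta_p)^s\tilde{u}\le(-\Delta_p)^s\overline{w}$ fails.

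The conclusion is in fact impossible. Your (ii) gives $\tilde{u}\ge\tilde{c}|x|^{-\frac{N-sp}{p-1}}$ on $B_1^c$, which contradicts $\tilde{u}\le D|x|^{-\beta}$ with $\beta>\frac{N-sp}{p-1}$ for large $|x|$. The two-sided bound \eqref{decayest} is inconsistent for the same reason.

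The repair is to use an exponent \emph{below} the fundamental-solution rate. Pick $\beta'\in\bigl(\frac{N-sp}{p},\frac{N-sp}{p-1}\bigr)$; Theorem \ref{fundsol} still applies, and now $C(\beta')>0$. Since $\beta'(p-1)+sp<N<N+\alpha-\gamma\frac{N-sp}{p-1}$, your estimate gives $g\le\hat{c}'|x|^{-\beta'(p-1)-sp}$ on $B_1^c$. Let $\overline{w}:=D\min\{1,|x|^{-\beta'}\}$. It belongs to $L^{p^*}(B_1^c)$ because $\beta'p^*>N$. Since $t\mapsto|t|^{p-2}t$ is increasing, lowering $|x|^{-\beta'}$ to $1$ on $\overline{B}_1$ can only increase the integrand of $(-\Delta_p)^s\overline{w}(x)$ for $x\notin\overline{B}_1$. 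Hence $(-\Delta_p)^s\overline{w}\ge D^{p-1}C(\beta')|x|^{-\beta'(p-1)-sp}$ weakly in $\R^N\setminus\overline{B}_1$. Taking $D^{p-1}C(\beta')\ge\hat{c}'$ and $D\ge\|\tilde{u}\|_\infty$, Theorem \ref{weakcomp} yields $\tilde{u}\le D|x|^{-\beta'}\to0$. The same change repairs the Section 3 lemma for $\overline{u}$.

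There is a smaller problem in the $L^\infty$ step. A priori $a\tilde{u}^r\in L^\tau(\R^N)$ only for $\tau\le p^*/r$. The condition $p^*/r>\frac{N}{sp}$ holds iff $r<\frac{sp^2}{N-sp}$, which is not assumed, so your integrability claim is unjustified in general. The paper avoids this with $\tilde{u}^r\le1+\tilde{u}^{p-1}$, which gives $f(\cdot,\tilde{u})\le a(\overline{u}^{-\gamma}+1)+a\,\tilde{u}^{p-1}$. This is a term $b_2|t|^{p-1}$ with $b_2=a\in L^1(\R^N)\cap L^\infty(\R^N)$, plus a forcing term $b_3=a(\overline{u}^{-\gamma}+1)$ lying in every $L^\tau(\R^N)$ by \eqref{sumau}. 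That is the structure \cite[Theorem 1.2]{CPT} handles, so no separate Moser iteration is needed.
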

\begin{proof}
Let $\bar{u}$ be as in Theorem \ref{overusol}, let $\tilde{u}$ be given by Lemma \ref{tildeu}, and let $\phi:=(\bar{u}-\tilde{u})^+$. Due to \eqref{hypf}, testing \eqref{singprob} and \eqref{trunprob} with $\varphi$ yields
$$\langle(-\Delta_p)^s\bar{u},\phi\rangle=
\int_{\R^N} a\,\bar{u}^{-\gamma}\phi\dx\le
\int_{\R^N} f(\cdot,\overline{u})\phi\dx=
\int_{\R^N}\bar{f}(\cdot,\tilde{u})\phi\dx=
\langle(-\Delta_p)^s\tilde{u},\phi\rangle,$$
namely
$$\langle(-\Delta_p)^s\bar{u}-(-\Delta_p)^s\tilde{u},
(\bar{u}-\tilde{u})^+\rangle\le 0.$$
By Lemma \ref{monfraclapl} this entails $\bar{u}\le\tilde{u}$. From \eqref{lowestu} and the properties of $\tilde{u}_1$ it next follows
\begin{equation}\label{ugreateroverlineu}
0<\tilde{u}_1\le\bar{u}\le\tilde{u}.
\end{equation}
Hence, $\tilde{u}$ turns out a positive weak solution to the equation $(-\Delta_p)^s u=f(x,u)$ in $\R^N$; c.f. Lemma \ref{tildeu}. Let us finally verify that $\displaystyle{\lim_{|x|\to\infty}}\tilde{u}(x)=0$. Combining \eqref{hypf} with \eqref{ugreateroverlineu} produces
$$0\le f(\cdot,\tilde{u})\le a\big(\tilde{u}^{-\gamma}+\tilde{u}^r\big)
\le a\big(\bar{u}^{-\gamma}+\tilde{u}^r\big)
\le a\big(\bar{u}^{-\gamma}+1\big)+a\,\tilde{u}^{p-1}.$$
Moreover, if
$$b_2(x):=a(x),\quad b_3(x):=a(x)\big(\bar{u}(x)^{-\gamma}+1\big) 
\quad\forall\, x\in\R^N$$
then $b_2\in L^1(\R^N)\cap L^\infty(\R^N)$ and, because of \eqref{sumau}, $b_3\in 
L^{\frac{p^*}{p-1}}(\R^N)\cap L^\infty(\R^N)$. Thus, through Theorem 1.2 in \cite{CPT} we obtain 
\begin{equation}\label{linftyu}
\tilde{u}\in L^{\infty} (\R^N).    
\end{equation}
Inequalities \eqref{lowestu} and \eqref{ugreateroverlineu} yield
\begin{equation}\label{lowefin}
\tilde{c}|x|^{-\frac{N-sp}{p-1}}\le\bar{u}(x)\le\tilde{u}(x),\;\; x\in B_1^c,
\end{equation}
while, thanks to \eqref{auest} and \eqref{linftyu},
\begin{equation*}
\begin{split}
f(x,\tilde{u}(x)) 
& \le a(x)\big(\tilde{u}(x)^{-\gamma}+\tilde{u}(x)^r\big) 
\le a(x)\big(\bar{u}(x)^{-\gamma}+\|\tilde{u}\|_{\infty}\big)\\
& \le\frac{c_a}{1+|x|^{N+\alpha}}\left(\tilde{c}^{-\gamma}|x|^{\gamma\frac{N-sp}{p-1}} +\|\tilde{u}\|_\infty\right)\le C|x|^{-\beta}
\;\;\forall\, x\in B_1^c,
\end{split}
\end{equation*}
where $\beta$ comes from \eqref{defbeta}. Proceeding as already done for \eqref{upperestu}, with $D>0$ bigger than $\|\tilde{u}\|_\infty$ too, we arrive at
\begin{equation*}
\tilde{u}(x)\le D|x|^{-\beta},\;\; x\in B_1^c,
\end{equation*}
whence, because of \eqref{lowefin}, the conclusion follows.
\end{proof}
\section*{Acknowledgments}
\noindent
The authors are members of the {\em Gruppo Nazionale per l'Analisi Matematica, la Proba\-bilit\`a e le loro Applicazioni} (GNAMPA) of the {\em Istituto Nazionale di Alta Matematica} (INdAM). \\
L. Gambera acknowledges the support of the project S.P.E.C.T.E.R. (Singular Problems, Ellipticity, and Convection Terms: Existence and Regularity), PRA 2020-2022, 
PIACERI, Linea 3, of the University of Catania.\\
The authors acknowledge the support of the project EdP.EReMo (Equazioni differenziali alle derivate parziali: esistenza, regolarità e molteplicità delle soluzioni), PRA 2022-2024, PIACE\-RI, Linea 1, of the University of Catania.

\end{document}